\newcommand{\ignore}[1]{}
\newcommand{\R}{\mathbb{R}}
\newcommand{\eqdef}{\overset{\text{def}}{=}} 
\newcommand{\E}[1]{\mathbf{E}\left[#1\right] } 
\newcommand{\norm}[1]{\lVert#1\rVert}
\newcommand{\Tr}[1]{\mathbf{Tr}\left( #1\right)}
\providecommand{\Null}[1]{\mathbf{Null}\left( #1\right)}
\providecommand{\Rank}[1]{\mathbf{Rank}\left( #1\right)}
\providecommand{\Range}[1]{\mathbf{Range}\left( #1\right)}
\newtheorem{proposition}{Proposition} [section]
\newtheorem{theorem}{Theorem}  [section]
\newtheorem{lemma}{Lemma} [section]
\newtheorem{corollary}{Corollary} [section]
\newtheorem{remark}{Remark} [section]
\title{Stochastic Dual Ascent for Solving Linear Systems} 
\author{Robert M. Gower and Peter Richt\'{a}rik\thanks{This author would like to acknowledge support from the EPSRC Grant EP/K02325X/1, ``Accelerated Coordinate Descent Methods for Big Data Optimization'' and EPSRC Fellowship Grant EP/N005538/1, ``Randomized Algorithms for Extreme Convex Optimization''. }\\\\
\it School of Mathematics\\ \it University of Edinburgh\\\it United Kingdom}
\date{December 21, 2015}
\begin{document}
\maketitle

\begin{abstract}{\footnotesize
We develop a new randomized iterative algorithm---{\em stochastic dual ascent (SDA)}---for finding the projection of a given vector onto the solution space of a  linear system. The method is dual in nature: with the dual being a non-strongly concave quadratic maximization problem without constraints. In each iteration of SDA, a dual variable is updated by a carefully chosen point in a subspace  spanned by the columns of a random matrix drawn independently from a fixed distribution. The distribution plays the role of a parameter of the method.  Our complexity results hold for a wide family of distributions of random matrices, which opens the possibility to fine-tune the stochasticity of the method to particular applications.  We prove that  primal iterates associated with the dual process converge to the projection exponentially fast in expectation, and give a formula and an insightful lower bound for the convergence rate. We also prove that the same rate applies to dual function values, primal function values and the duality gap. Unlike traditional iterative methods, SDA converges under no additional assumptions on the system (e.g., rank, diagonal dominance) beyond consistency. In fact, our lower bound  improves as the rank of the system matrix drops.   Many existing randomized methods for linear systems arise as special cases of SDA, including randomized Kaczmarz, randomized Newton, randomized coordinate descent, Gaussian descent, and their variants. In special cases where our method specializes to a known algorithm, we either recover the best known rates, or improve upon them. Finally, we show that the framework can be applied to the distributed average consensus problem to obtain  an array of new algorithms. The randomized gossip algorithm arises as a special case. 
}
\end{abstract}


\section{Introduction}

Probabilistic ideas and tools have recently begun to permeate into several fields where they had traditionally not played a major role,  including fields such as  numerical linear algebra and optimization. One of the key ways in which these ideas influence these fields is via the development and analysis of  {\em randomized algorithms} for solving standard and new problems of these fields. Such methods are typically easier to analyze, and often lead to faster and/or more scalable and versatile methods in practice.

\subsection{The problem}

In this paper we  consider a key problem in linear algebra, that of finding a solution of  a system of linear equations
\begin{equation}\label{eq:Axbx}Ax =b,\end{equation}
where $A \in \R^{m \times n}$  and $b \in \R^m$. We shall assume throughout that the system is {\em consistent}, that is,  that  there exists $x^*$ for which $Ax^*=b$.  While we assume the existence of a solution, we do not assume uniqueness. In situations with multiple solutions, one is often interested in finding a solution with specific properties. For instance, in compressed sensing and sparse optimization, one is interested in finding the least $\ell_1$-norm, or the least $\ell_0$-norm (sparsest) solution. 

In this work we shall focus on the canonical problem of finding the solution  of \eqref{eq:Axbx} closest, with respect to a Euclidean distance,  to a given vector $c\in \R^n$:
\begin{eqnarray}  \text{minimize} && P(x)\eqdef \tfrac{1}{2}\|x-c\|_B^2 \notag\\
\text{subject to} && Ax=b \label{eq:P}\\
&& x\in \R^n.\notag
\end{eqnarray}
where $B$ is an $n\times n$ symmetric positive definite matrix and 
$\|x\|_B \eqdef \sqrt{x^\top B x}$. By $x^*$ we denote the (necessarily) unique solution of \eqref{eq:P}. Of key importance in this paper is the {\em dual problem}\footnote{Technically, this  is both the Lagrangian and Fenchel dual of \eqref{eq:P}.} to \eqref{eq:P}, namely
\begin{eqnarray}\label{eq:D} \text{maximize} && D(y)\eqdef (b-Ac)^\top y - \tfrac{1}{2}\|A^\top y\|_{B^{-1}}^2\\
\text{subject to}&& y \in \R^m. \notag
\end{eqnarray}

Due to the consistency assumption, strong duality holds and we have $P(x^*) = D(y^*)$, where $y^*$ is any dual optimal solution.

\subsection{A new family of stochastic optimization algorithms}

We propose to solve \eqref{eq:P} via a new method operating in the dual \eqref{eq:D}, which we call {\em stochastic  dual ascent} (SDA). The iterates of SDA are of the form
\begin{equation}\label{eq:methoddual0} y^{k+1} = y^k + S \lambda^k,\end{equation}
where $S$ is a random matrix with $m$ rows  drawn in each iteration independently from a pre-specified distribution ${\cal D}$, which should be seen as a parameter of the method. In fact,  by varying ${\cal D}$, SDA should be seen as a family of algorithms indexed by $\cal D$, the choice of which leads to specific algorithms in this family.   By performing steps of the form \eqref{eq:methoddual0}, we are moving in the range space of the random matrix $S$. A key feature of SDA enabling us to prove strong  convergence results despite the fact that the dual objective is in general not strongly concave is the way in which  the ``stepsize'' parameter $\lambda^k$ is chosen: we chose $\lambda^k$ to be the {\em least-norm} vector for which $D(y^k + S\lambda)$ is maximized in $\lambda$. Plugging this $\lambda^k$ into~\eqref{eq:methoddual0}, we obtain the SDA method:
\begin{equation}\label{eq:SDA-compact0}
\boxed{\quad y^{k+1} =  y^k + S \left(S^\top A B^{-1} A^\top S\right)^\dagger S^\top \left(b - A \left( c+B^{-1}A^\top y^k \right) \right) \quad }
\end{equation}

The symbol $\dagger$ denotes the Moore-Penrose pseudoinverse\footnote{It is known that the vector $M^\dagger d$ is the least-norm solution of the least-squares problem $\min_{\lambda} \|M \lambda - d\|^2$. Hence, if the system $M\lambda =d$ has a solution, then $M^\dagger d = \arg \min_\lambda \{\| \lambda \| \;:\; M \lambda =d\}$.}.

To the best of our knowledge, a randomized optimization algorithm  with iterates of the {\em general} form \eqref{eq:methoddual0}  was not considered nor analyzed before. In the special case when $S$ is chosen to be a random unit coordinate vector, SDA specializes to  the {\em randomized coordinate descent method}, first analyzed by Leventhal and Lewis \cite{Leventhal:2008:RMLC}. 
In the special case when $S$ is chosen as a random column submatrix of the $m\times m$ identity matrix, SDA specializes to the {\em randomized Newton method} of Qu, Fercoq, Richt\'{a}rik and Tak\'{a}\v{c} \cite{SDNA}.

With the dual iterates $\{y^k\}$ we associate a sequence of primal iterates $\{x^k\}$ as follows:
\begin{equation} \label{eq:primaliterates0} x^k \eqdef c + B^{-1}A^\top y^k.\end{equation}
In combination with \eqref{eq:SDA-compact0}, this yields the primal iterative process
 \begin{equation}\label{eq:SDA-primal0}
\boxed{\quad x^{k+1} =  x^k - B^{-1}A^\top S \left(S^\top A B^{-1} A^\top S\right)^\dagger S^\top \left(A x^k  -b \right) \quad }
\end{equation}

Optimality conditions (see Section~\ref{subsec:OptCond}) imply that if $y^*$ is any dual optimal point, then $c+B^{-1}A^\top y^*$ is necessarily primal optimal and hence equal to $x^*$, the optimal solution of \eqref{eq:P}. Moreover,  we have the following useful and insightful correspondence between the quality of the primal and dual iterates (see Proposition~\ref{lem:correspondence}):
\begin{equation}\label{eq:iugs8gs}D(y^*) - D(y^k) = \tfrac{1}{2}\|x^k-x^*\|_B^2.\end{equation}
Hence, {\em dual convergence in  function values is equivalent to primal convergence in iterates.}

Our work belongs to a growing literature on randomized methods for various problems appearing in linear algebra, optimization and computer science. In particular, relevant methods include sketching algorithms, randomized Kaczmarz, stochastic gradient descent and their variants  \cite{SV:Kaczmarz2009,Needell2010,Drineas2011,hogwild,Zouzias2012, Needell2012,Needell2012a, Ramdas2014, SAG, pegasos2,SVRG, NSync, S2GD, proxSVRG, SAGA, mS2GD, IProx-SDCA,Needell2014, Dai2014, NeedellWard2015, Ma2015a, Gower2015b,Oswald2015,LiuWright-AccKacz-2016}
and randomized coordinate and subspace type methods and their variants \cite{Leventhal:2008:RMLC,Lin:2008:DCDM,ShalevTewari09,Nesterov:2010RCDM,Wright:ABCRRO,shotgun,UCDC,nesterov2011random,PCDM,
tao2012stochastic,Necoara:Parallel,ICD,Necoara:rcdm-coupled,Hydra,Hydra2,SDCA,Fercoq-paralleladaboost,APPROX, Lee2013,QUARTZ,SPCDM,ALPHA,ESO,SPDC,SDNA,NIPSdistributedSDCA,ASDCA,
APCG,AdaSDCA,Gower2015b}.

\subsection{The main results} 

We now describe two complexity theorems which form the core theoretical contribution of this work. The results hold for a wide family of distributions ${\cal D}$, which we describe next.

\paragraph{Weak assumption on ${\cal D}$.} In our analysis, we only impose a very weak assumption on $\cal D$. In particular, we only assume that the $m\times m$ matrix \begin{equation} \label{eq:H} H \eqdef \mathbf{E}_{S\sim {\cal D}} \left[ S\left(S^\top AB^{-1}A^\top S\right)^{\dagger}S^\top\right]\end{equation}
 is well defined and nonsingular\footnote{It is known that the pseudoinverse of a symmetric positive semidefinite matrix is again symmetric and positive semidefinite. As a result, if the expectation defining $H$ is finite, $H$ is also symmetric and positive semidefinite. Hence, we could equivalently assume that $H$ be positive definite.
}.   Hence, we do not assume that $S$ be picked from any particular random matrix ensamble: the options are, quite literally, limitless. This makes it possible for practitioners to choose the best distribution specific to a particular application. 

We cast the first complexity result in terms of the primal iterates since solving \eqref{eq:P} is our main focus in this work. Let $\Range{M}, \Rank{M}$ and $\lambda_{\min}^+(M)$ denote the range space, rank and the smallest nonzero eigenvalue of $M$, respectively.

\begin{theorem}[\bf Convergence of primal iterates and of the residual] \label{theo:Enormerror}
Assume that the matrix $H$, defined in \eqref{eq:H}, is nonsingular. Fix arbitrary $x^0\in \R^n$. The primal iterates $\{x^k\}$ produced by \eqref{eq:SDA-primal0} converge exponentially fast in expectation to $x^* + t$, where  $x^*$ is the optimal solution of the primal problem \eqref{eq:P}, and $t$ is the projection of $x^0-c$ onto $\Null{A}$:
\begin{equation}\label{eq:def_of_t}t \eqdef \arg \min_{t'} \left\{ \|x^0-c - t'\|_B \;:\; t'\in \Null{A}\right\}.\end{equation}
In particular,  for all $k\geq 0$ we have
\begin{eqnarray}\label{eq:Enormerror} \text{Primal iterates:} \quad &&\E{\norm{x^{k} - x^{*}- t}_{B}^2}\leq \rho^k \cdot \norm{x^{0} -  x^{*} - t}_{B}^2,\\
\label{eq:s98h09hsxxx} \text{Residual:} \quad && \E{\|A x^k-b\|_B} \leq \rho^{k/2} \|A\|_B \|x^0-x^*-t\|_B + \|At\|_B,\end{eqnarray}
where  $\|A\|_B \eqdef \max \{\|Ax\|_B \;:\; \|x\|_B\leq 1\}$ and 
\begin{equation}\label{eq:rho} \rho \eqdef 1- \lambda_{\min}^+\left(B^{-1/2}A^\top H A B^{-1/2}\right).
\end{equation}
 Furthermore, the convergence rate is bounded by
\begin{equation} \label{eq:nubound}
1-\frac{\E{\Rank{S^\top A}}}{\Rank{A}}\leq \rho < 1.
\end{equation}

\end{theorem}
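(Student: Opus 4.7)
The plan is to reduce the iteration to a contraction in the $B$-norm. Define $Q_k \eqdef B^{-1}A^\top S_k(S_k^\top A B^{-1}A^\top S_k)^\dagger S_k^\top A$ where $S_k \sim \cal D$ is the draw at step $k$. The primal update \eqref{eq:SDA-primal0} can be rewritten as $x^{k+1}=x^k-Q_k(x^k-x^*)$ since $Ax^*=b$. Using $M^\dagger M M^\dagger=M^\dagger$ one checks $Q_k^2=Q_k$, and using symmetry of the pseudoinverse one checks $Q_k^\top B=BQ_k$; hence $Q_k$ is a $B$-orthogonal projector and $I-Q_k$ is non-expansive in $\norm{\cdot}_B$.

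Next I would introduce the error $r^k\eqdef x^k-x^*-t$ and show two facts: (i) $Q_k t=0$, because $At=0$; and (ii) $r^k$ is $B$-orthogonal to $\Null{A}$. The second point follows because $x^*-c$ lies in $\Null{A}^{\perp_B}=\Range{B^{-1}A^\top}$ (optimality of the projection of $c$), so the $B$-projection of $x^k-x^*$ onto $\Null{A}$ equals the $B$-projection of $x^0-c$ onto $\Null{A}$, namely $t$; subtracting $t$ annihilates the in-$\Null{A}$ component. Combined with (i), the update satisfies the clean recursion $r^{k+1}=(I-Q_k)r^k$. Expanding by $B$-Pythagoras,
\begin{equation*}
\norm{r^{k+1}}_B^2 = \norm{r^k}_B^2 - \norm{Q_k r^k}_B^2,
\end{equation*}
and a direct computation (using $Q_k^\top B Q_k = BQ_k$) gives $\norm{Q_k r^k}_B^2 = (r^k)^\top A^\top S_k(S_k^\top AB^{-1}A^\top S_k)^\dagger S_k^\top A r^k$, whose conditional expectation is $(r^k)^\top A^\top H A r^k$.

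To turn this into a contraction I would change variables to $v^k=B^{1/2}r^k$, so that $(r^k)^\top A^\top H A r^k = (v^k)^\top W v^k$ with $W\eqdef B^{-1/2}A^\top H A B^{-1/2}$. Because $H\succ 0$, $\Null{W}=B^{1/2}\Null{A}$ and $\Range{W}=B^{1/2}\Null{A}^{\perp_B}$. The key step is that $r^k\in\Null{A}^{\perp_B}$ places $v^k\in\Range{W}$, so $(v^k)^\top W v^k\ge \lambda_{\min}^+(W)\norm{v^k}_2^2=(1-\rho)\norm{r^k}_B^2$. Taking conditional expectation yields $\E{\norm{r^{k+1}}_B^2\mid r^k}\le\rho\norm{r^k}_B^2$, and iterating the tower property gives \eqref{eq:Enormerror}. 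The residual bound \eqref{eq:s98h09hsxxx} then follows from $Ax^k-b=A(x^k-x^*)=Ar^k+At$, the definition of $\|A\|_B$, and Jensen's inequality $\E{\norm{r^k}_B}\le\sqrt{\E{\norm{r^k}_B^2}}$.

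For the upper bound $\rho<1$, note that $W$ is PSD with $\Rank{W}=\Rank{A}>0$, so $\lambda_{\min}^+(W)>0$. For the lower bound on $\rho$ in \eqref{eq:nubound}, I would use the elementary inequality $\lambda_{\min}^+(W)\le\Tr{W}/\Rank{W}$. The trace is computed by cycling: $\Tr{W}=\Tr{HAB^{-1}A^\top}=\E{\Tr{(S^\top AB^{-1}A^\top S)(S^\top AB^{-1}A^\top S)^\dagger}}=\E{\Rank{S^\top AB^{-1}A^\top S}}=\E{\Rank{S^\top A}}$, using that $\Tr{MM^\dagger}=\Rank{M}$ for symmetric $M$ and that $B^{-1/2}$ is invertible. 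Combined with $\Rank{W}=\Rank{A}$, this gives \eqref{eq:nubound}. The main obstacle, conceptually, is step (ii) — establishing that the shifted error $r^k$ actually lies in $\Null{A}^{\perp_B}$, which is what legitimizes replacing $\lambda_{\min}$ (which would be zero) by the \emph{nonzero} smallest eigenvalue $\lambda_{\min}^+$ and hence getting a genuine geometric rate from a non-strongly-concave dual.
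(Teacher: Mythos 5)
Your proposal is correct and follows essentially the same route as the paper: the shift by $t$, the observation that the shifted error stays in $\Range{B^{-1}A^\top}=\Null{A}^{\perp_B}$ (the paper's Lemma~\ref{lem:error}, which you phrase via $B$-orthogonal projections rather than explicit induction), the $B$-Pythagoras identity from the projector structure of $B^{-1}Z$, the restricted smallest-eigenvalue bound on $\Range{W}$ (equivalent to Lemma~\ref{lem:WGWtight} under the substitution $v=W^\top y$), and the trace argument $\lambda_{\min}^+(W)\leq \Tr{W}/\Rank{W}$ with $\Tr{W}=\E{\Rank{S^\top A}}$ for the lower bound. The only cosmetic differences are your use of $\Tr{MM^\dagger}=\Rank{M}$ in place of the paper's trace-of-projector identity \eqref{eq:ugisug7sss}, and the compressed justification of step (ii), which implicitly relies on each increment $x^{k+1}-x^k$ lying in $\Range{B^{-1}A^\top}$ --- exactly the induction the paper makes explicit.
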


If we let $S$ be a unit coordinate vector chosen at random, $B$  be the identity matrix and set $c=0$, then \eqref{eq:SDA-primal0} reduces to the {\em randomized Kaczmarz (RK)} method proposed and analyzed in a seminal work of Strohmer and Vershynin \cite{SV:Kaczmarz2009}. Theorem~\ref{theo:Enormerror}
implies that RK converges with an exponential rate so long as the system matrix has no zero rows (see Section~\ref{sec:discrete}).  To the best of our knowledge, such a result was not previously established: current convergence results for RK assume that the system matrix is full rank~\cite{Ma2015a, Ramdas2014}. Not only do we show that the RK method converges to the least-norm solution for any consistent system, but we do so through a single all encompassing theorem covering a wide family of algorithms. Likewise, convergence of block variants of RK has only been established for full column rank~\cite{Needell2012,Needell2014}. Block versions of RK can be obtained from our generic method by choosing $B=I$ and $c=0$, as before, but letting $S$ to be a random column submatrix of the identity matrix (see \cite{Gower2015b}). Again, our general complexity bound holds under no assumptions on $A$, as long as one can find $S$ such that $H$ becomes nonsingular.

The lower bound \eqref{eq:nubound} says that for a singular system matrix, the number of steps required by SDA to reach an expected accuracy is at best inversely proportional to the rank of $A$. If $A$ has row rank equal to one, for instance, then  RK converges in one step (this is no surprise, given that RK projects onto the solution space of a single row, which in this case, is the solution space of the whole system). Our lower bound in this case becomes $0$, and hence is 
tight. 

While Theorem~\ref{theo:Enormerror} is cast in terms of the primal  iterates, if we  assume that $x^0 = c+ B^{-1}A^\top y^0$ for some $y^0\in \R^m$, then an equivalent dual characterization follows by combining  \eqref{eq:primaliterates0} and \eqref{eq:iugs8gs}. In fact, in that case we can  also establish the convergence of the primal function values and of the duality gap. {\em No such results were previously known. }

\begin{theorem}[\bf Convergence of function values]\label{theo:2}
Assume that the matrix $H$, defined in \eqref{eq:H}, is nonsingular. Fix arbitrary $y^0\in \R^m$ and let $\{y^k\}$ be the SDA iterates produced by \eqref{eq:SDA-compact0}. Further, let $\{x^k\}$ be the associated primal iterates, defined by \eqref{eq:primaliterates0}, $OPT  \eqdef P(x^*)=D(y^*)$,
\[U_0 \eqdef \tfrac{1}{2}\|x^0-x^*\|_B^2 \overset{\eqref{eq:iugs8gs}}{=} OPT -D(y^0),\] 
and let $\rho$ be as in Theorem~\ref{theo:Enormerror}. Then for all $k\geq 0$ we have the following complexity bounds:
\begin{eqnarray} 
\text{Dual suboptimality:} \quad &&\E{OPT - D(y^k)}\leq \rho^k U_0\label{eq:DUALSUBOPT} \\
\text{Primal suboptimality:} \quad &&\E{P(x^k) - OPT}\leq \rho^k U_0 +  2 \rho^{k/2}  \sqrt{OPT \times U_0} \label{eq:PRIMALSUBOPT} \\
 \text{Duality gap:} \quad && \E{P(x^k) - D(y^k)}\leq 2\rho^k U_0 +  2 \rho^{k/2} \sqrt{OPT\times U_0}  \label{eq:GAPSUBOPT}
\end{eqnarray}

\end{theorem}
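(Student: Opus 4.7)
The plan is to reduce everything to Theorem~\ref{theo:Enormerror} via the primal-dual correspondence \eqref{eq:iugs8gs}, with the key initial observation that the offset $t$ from Theorem~\ref{theo:Enormerror} vanishes under the initialization $x^0 = c + B^{-1}A^\top y^0$. Indeed, for any $t' \in \Null{A}$, one has $\langle B^{-1}A^\top y^0, t'\rangle_B = (y^0)^\top A t' = 0$, so $x^0 - c \in \Range{B^{-1}A^\top}$ is $B$-orthogonal to $\Null{A}$; hence the $B$-projection of $x^0-c$ onto $\Null{A}$ is $t=0$. Thus Theorem~\ref{theo:Enormerror} simplifies to $\E{\|x^k - x^*\|_B^2} \leq \rho^k \|x^0 - x^*\|_B^2 = 2\rho^k U_0$.

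For the dual suboptimality bound \eqref{eq:DUALSUBOPT}, I would apply \eqref{eq:iugs8gs} directly: taking expectations, $\E{OPT - D(y^k)} = \tfrac{1}{2}\E{\|x^k - x^*\|_B^2} \leq \rho^k U_0$.

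For the primal suboptimality bound \eqref{eq:PRIMALSUBOPT}, I would expand $P(x^k) - OPT$ around $x^*$. Writing $x^k - c = (x^k - x^*) + (x^* - c)$ and squaring the $B$-norm gives
\[
P(x^k) - OPT = \tfrac{1}{2}\|x^k - x^*\|_B^2 + \langle x^k - x^*, x^* - c\rangle_B.
\]
By Cauchy--Schwarz in the $B$-inner product and the identity $\|x^* - c\|_B = \sqrt{2\,OPT}$ (which comes from $OPT = P(x^*) = \tfrac{1}{2}\|x^* - c\|_B^2$), the cross term is bounded by $\|x^k - x^*\|_B \sqrt{2\,OPT}$. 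Taking expectations and applying Jensen's inequality to pass from $\E{\|x^k - x^*\|_B}$ to $\sqrt{\E{\|x^k - x^*\|_B^2}} \leq \sqrt{2\rho^k U_0}$, I obtain $\E{P(x^k) - OPT} \leq \rho^k U_0 + \sqrt{2\rho^k U_0}\sqrt{2\,OPT} = \rho^k U_0 + 2\rho^{k/2}\sqrt{OPT \times U_0}$.

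Finally, the duality gap bound \eqref{eq:GAPSUBOPT} follows by adding the two previous estimates, since $P(x^k) - D(y^k) = (P(x^k) - OPT) + (OPT - D(y^k))$. The only slightly subtle step is the reduction $t = 0$; without it, Theorem~\ref{theo:Enormerror} would only control $\|x^k - x^* - t\|_B$, which is not what the identity \eqref{eq:iugs8gs} relates to the dual gap. After that, everything reduces to routine $B$-norm manipulations and Jensen's inequality, so I do not anticipate a hard step.
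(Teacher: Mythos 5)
Your proposal is correct and follows essentially the same route as the paper: $t=0$ from the initialization $x^0=c+B^{-1}A^\top y^0$, dual suboptimality via Proposition~\ref{lem:correspondence}, primal suboptimality via the expansion $\tfrac{1}{2}\|x^k-c\|_B^2-\tfrac{1}{2}\|x^*-c\|_B^2=\tfrac{1}{2}\|x^k-x^*\|_B^2+\langle x^k-x^*,x^*-c\rangle_B$ with Cauchy--Schwarz and Jensen, and the gap by summation. No gaps.
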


Note that the dual objective function is {\em not} strongly concave  in general, and yet we prove linear convergence (see \eqref{eq:DUALSUBOPT}). It is known that for {\em some} structured optimization problems,  linear convergence results  can be obtained without the need to assume strong concavity (or strong convexity, for minimization problems). Typical approaches to such results  would be via the employment of error bounds \cite{LuoTseng93-AOR, Tseng95-JCAM, HongLuo2013,Ma-Tapp-Takac-FeasibleDescent-2015,NecoaraClipiciSIOPT2016}.
{\em In our analysis, no error bounds are necessary.}

\subsection{Outline}

The paper is structured as follows.  Section~\ref{sec:SDA}  describes the algorithm in detail, both in its dual and primal form, and establishes several useful identities. In Section~\ref{sec:discrete} we characterize discrete distributions for which our main assumption on $H$ is satisfied. We then specialize our method to several simple discrete distributions to better illustrate the results. We then show in Section~\ref{sec:gossip} how SDA can be applied to design new randomized gossip algorithms. We also show that our framework can recover some standard methods. Theorem~\ref{theo:Enormerror} is proved in Section~\ref{sec:proof} and Theorem~\ref{theo:2} is proved in Section~\ref{sec:proof2}. In Section~\ref{sec:experiments} we perform a simple experiment illustrating the convergence of the randomized Kaczmarz method on rank deficient linear systems. We conclude in Section~\ref{sec:conclusion}. To the appendix  we relegate two elementary but useful technical results which are needed multiple times in the text.

\section{Stochastic Dual Ascent} \label{sec:SDA}

By {\em stochastic  dual ascent} (SDA) we refer to a randomized optimization method for solving the dual problem \eqref{eq:D} performing iterations of the form
\begin{equation}\label{eq:methoddual} y^{k+1} = y^k + S \lambda^k,\end{equation}
where $S$ is a random matrix with $m$ rows drawn in each iteration independently from a prespecified distribution. We shall not fix the number of columns of $S$; in fact, we even allow for the number of columns to be random. By performing steps of the form \eqref{eq:methoddual}, we are moving in the range space of the random matrix $S$, with $\lambda^k$ describing the precise linear combination of the columns used in computing the step. 
In particular, we shall choose $\lambda^k$ from the set 
\[Q^k \eqdef \arg \max_{\lambda} D(y^k + S\lambda) \overset{\eqref{eq:D}}{=} \arg\max_{\lambda} \left\{ (b-Ac)^\top (y^k + S\lambda) - \tfrac{1}{2}\left\|A^\top (y^k + S \lambda)\right\|_{B^{-1}}^2\right\}.\]
Since $D$ is bounded above (a consequence of weak duality), this set is nonempty. Since $D$ is a concave quadratic, $Q^k$ consists of all those vectors $\lambda $ for which the gradient of the mapping $\phi_k(\lambda): \lambda \mapsto D(y^k + S\lambda)$ vanishes. 
This leads to the observation that $Q^k$ is the set of solutions of a random linear system:
\[Q^k =  \left\{\lambda \in \R^m \;:\; \left(S^\top A B^{-1}A^\top S \right) \lambda = S^\top \left(b - Ac - A B^{-1}A^\top y^k \right) \right\}.\]
If $S$ has a small number of columns, this is a small easy-to-solve system. 

A key feature of our method enabling us to prove exponential error decay despite the lack of strong concavity is the  way in which we choose $\lambda^k$ from $Q^k$. In  SDA, $\lambda^k$  is chosen to be the least-norm  element of $Q^k$,
\[\lambda^k \eqdef \arg\min_{\lambda \in Q^k} \|\lambda\|,\]
where $\|\lambda\| = (\sum_i \lambda_i^2)^{1/2}$ denotes standard Euclidean norm. The least-norm solution of a linear system can be written down in a compact way using the (Moore-Penrose) pseudoinverse. In our case, we obtain the formula
\begin{equation}\label{eq:lambda_closed_form}
\lambda^k = \left(S^\top A B^{-1} A^\top S\right)^\dagger S^\top \left(b - Ac - A B^{-1}A^\top y^k  \right),
\end{equation}
where $\dagger$ denotes the pseudoinverse operator.
Note that if $S$ has only a few columns, then~\eqref{eq:lambda_closed_form} requires projecting the origin onto a small linear system. 
The SDA algorithm is obtained by combining \eqref{eq:methoddual} with \eqref{eq:lambda_closed_form}.

\begin{algorithm}[!h]
\begin{algorithmic}[1]
\State \textbf{parameter:} ${\cal D}$ = distribution over random matrices
\State Choose $y^0 \in \R^m$
\Comment Initialization
\For {$k = 0, 1, 2, \dots$}
	\State Sample an independent copy $S\sim {\cal D}$
	\State $\lambda^k = \left(S^\top A B^{-1} A^\top S\right)^\dagger S^\top \left(b - A c - AB^{-1}A^\top y^k  \right)$
	\State $y^{k+1} =  y^k + S  \lambda^k$
		\Comment Update the dual variable
\EndFor
\end{algorithmic}

\caption{Stochastic Dual Ascent (SDA)}
\label{alg:SDA}
\end{algorithm}

The method has one parameter: the distribution $\cal D$ from which the random matrices $S$ are drawn. Sometimes, one is interested in finding any solution of the system $Ax=b$, rather than the particular solution described by the primal problem 
\eqref{eq:P}. In such situations, $B$ and $c$ could also be seen as parameters.

\subsection{Optimality conditions} \label{subsec:OptCond}

For any $x$ for which $Ax=b$ and for any $y$ we have
\[P(x) - D(y) \overset{\eqref{eq:P}+\eqref{eq:D}}{=} \tfrac{1}{2}\|x-c\|_B^2 + \tfrac{1}{2}\|A^\top y\|_{B^{-1}}^2 + (c-x)^\top A^\top y \geq 0,\]
where the inequality (weak duality) follows from the Fenchel-Young inequality\footnote{Let $U$ be a vector space equipped with an inner product $\langle \cdot, \cdot \rangle : U\times U \to \R$. Given a function $f:U \to \R$, its convex (or Fenchel) conjugate $f^*:U\to \R\cup \{+\infty\}$ is defined by $f^*(v) = \sup_{u \in U} \langle u, v \rangle - f(u)$. A direct consequence of this is the  Fenchel-Young inequality, which asserts that  $f(u) + f^*(v)\geq \langle u, v\rangle$ for all $u$ and $v$.  The inequality in the main text follows by choosing $f(u)=\tfrac{1}{2}\|u\|_B^2$ (and hence  $f^*(v)=\tfrac{1}{2}\|v\|^2_{B^{-1}}$), $u=x-c$ and $v=A^\top y$. If $f$ is differentiable, then equality holds if and only if $v=\nabla f(u)$. In our case, this condition is $x=c+B^{-1}A^\top y$. This, together with primal feasibility, gives  the optimality conditions \eqref{eq:opt_cond}. For more details on Fenchel duality, see \cite{bookBorweinLewis2006}.}. As a result, we obtain the following necessary and sufficient optimality conditions, characterizing primal and dual optimal points.

\begin{proposition} [Optimality conditions] \label{eq:prop_opt_cond}Vectors $x\in \R^n$ and $y\in\R^m$ are optimal for the primal \eqref{eq:P} and dual \eqref{eq:D} problems respectively, if and only if they satisfy the following relation
\begin{equation} \label{eq:opt_cond}Ax = b, \qquad x = c + B^{-1} A^\top y.\end{equation}
\end{proposition}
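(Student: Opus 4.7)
The plan is to exploit the duality-gap identity that the excerpt has just derived, and recognize it as the Fenchel--Young inequality whose equality case is elementary. Recall that for any primal feasible $x$ (i.e., $Ax=b$) and any $y\in\R^m$, the derivation above the proposition gives
\[P(x)-D(y) \;=\; \tfrac{1}{2}\|x-c\|_B^2 + \tfrac{1}{2}\|A^\top y\|_{B^{-1}}^2 - (x-c)^\top A^\top y.\]
Since $f(u)=\tfrac{1}{2}\|u\|_B^2$ is strictly convex, differentiable, and has conjugate $f^*(v)=\tfrac{1}{2}\|v\|_{B^{-1}}^2$, the Fenchel--Young inequality $f(u)+f^*(v)\geq \langle u,v\rangle$ holds with equality if and only if $v=\nabla f(u)=Bu$. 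Applied to $u=x-c$ and $v=A^\top y$, this shows that the right-hand side vanishes exactly when $B(x-c)=A^\top y$, equivalently $x=c+B^{-1}A^\top y$. (One can arrive at the same conclusion directly, by recognizing the right-hand side as the completed square $\tfrac{1}{2}\|B^{1/2}(x-c)-B^{-1/2}A^\top y\|^2$.)

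For the sufficiency direction I would assume $Ax=b$ and $x=c+B^{-1}A^\top y$ and plug into the identity above to conclude $P(x)=D(y)$. Weak duality, already established, gives $P(\tilde x)\geq D(\tilde y)$ for every feasible $\tilde x$ and every $\tilde y\in\R^m$. Setting $\tilde x=x$ yields $D(y)\geq D(\tilde y)$ for all $\tilde y$, so $y$ is dual optimal; setting $\tilde y=y$ yields $P(\tilde x)\geq P(x)$ for all feasible $\tilde x$, so $x$ is primal optimal.

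For the necessity direction I would suppose $x$ and $y$ are primal and dual optimal, respectively. Primal feasibility immediately gives $Ax=b$. Strong duality, which holds by the consistency assumption on $Ax=b$ (as noted in the paper right after \eqref{eq:D}), then forces $P(x)=D(y)$, so the duality gap is zero. Invoking the equality case from the first paragraph produces $x=c+B^{-1}A^\top y$, completing the proof.

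There is no serious obstacle: the only nontrivial external input is strong duality, which the paper has already asserted. The argument reduces to recognizing the duality gap as a completed square (or, equivalently, as a Fenchel--Young residual), and reading off the equality condition.
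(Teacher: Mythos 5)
Your proposal is correct and follows essentially the same route as the paper: the authors also derive the duality gap as a Fenchel--Young residual (equivalently a completed square in the $B$-norm), read off the equality condition $x=c+B^{-1}A^\top y$, and combine it with primal feasibility, weak duality, and the already-asserted strong duality. Your write-up merely makes the sufficiency and necessity directions more explicit than the paper's terse footnote-based argument.
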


In  view of this, it will be useful to define a linear mapping from $\R^m$ to $\R^n$ as follows:
\begin{equation}\label{eq:98s98hs}x(y) = c + B^{-1}A^\top y.\end{equation}

As an immediate corollary of Proposition~\ref{eq:prop_opt_cond} we observe that for any dual optimal $y^*$, the vector $x(y^*)$ must be primal optimal. Since the primal problem has a unique optimal solution, $x^*$,   we must necessarily have \begin{equation}\label{eq:opt_primal}x^* =x(y^*) = c + B^{-1} A^\top y^*.\end{equation}

Another immediate corollary of Proposition~\ref{eq:prop_opt_cond} is the following characterization of dual optimality:  $y$ is dual optimal if and only if
\begin{equation} \label{eq:98hs8h9sss}b  - Ac = AB^{-1}A^\top y.\end{equation}
Hence, the set of dual optimal solutions is
${\cal Y}^* = (AB^{-1}A^\top)^\dagger (b-Ac) + \Null{AB^{-1}A^\top}$. Since, $\Null{AB^{-1}A^\top} = \Null{A^\top}$ (see Lemma~\ref{lem:09709s}), we have
\[{\cal Y}^* = \left(AB^{-1}A^\top\right)^\dagger (b-Ac) + \Null{A^\top}.\]

Combining this with \eqref{eq:opt_primal}, we get

\[x^* = c + B^{-1}A^\top \left(AB^{-1}A^\top \right)^\dagger(b-Ac).\]

\begin{remark} [The dual is also a least-norm problem.] Observe that:
\begin{enumerate}
\item 
The particular dual optimal point $y^* = (AB^{-1}A^\top)^\dagger (b-Ac)$ is the solution of the following optimization problem:

\begin{equation} \label{eq:iusiuh7ss}\min \left\{ \tfrac{1}{2}\|y\|^2 \;:\; A B^{-1} A^\top y = b-Ac\right\}.\end{equation}

Hence, this particular formulation of the dual problem has the same form as the primal problem: projection onto a linear system.

\item If $A^\top A$ is positive definite (which can only happen if $A$ is of full column rank, which means that $Ax=b$ has a unique solution and hence the primal objective function does not matter), and we choose $B=A^\top A$, then the dual constraint \eqref{eq:iusiuh7ss} becomes

\[A (A^\top A)^{-1}A^\top y = b - Ac.\]

This constraint has a geometric interpretation: we are seeking vector $y$ whose orthogonal projection onto the column space of $A$ is equal to $b-Ac$. Hence the reformulated dual problem \eqref{eq:iusiuh7ss} is asking us to find the vector $y$ with this property having the least norm.

\end{enumerate}
\end{remark}

\subsection{Primal iterates associated with the dual iterates}

With the sequence  of dual  iterates $\{y^k\}$ produced by SDA  we can associate a sequence of primal iterates $\{x^k\}$  using the mapping \eqref{eq:98s98hs}:
\begin{equation} \label{eq:primaliterates} x^k \eqdef x(y^k) = c + B^{-1}A^\top y^k.\end{equation}

This leads to the following {\em primal version of the SDA method}.

\begin{algorithm}[!h]
\begin{algorithmic}[1]
\State \textbf{parameter:} ${\cal D}$ = distribution over random matrices
\State Choose $x^0 \in \R^n$
\Comment Initialization
\For {$k = 0, 1, 2, \dots$}
	\State Sample an independent copy $S\sim {\cal D}$
	\State $x^{k+1} =  x^k - B^{-1}A^\top S \left(S^\top A B^{-1} A^\top S\right)^\dagger S^\top (A x^k  -b )$
		\Comment Update the primal variable
\EndFor
\end{algorithmic}

\caption{Primal Version of Stochastic Dual Ascent (SDA-Primal)}
\label{alg:SDA-Primal}
\end{algorithm}

\begin{remark} \label{lem:5shsuss} 

A couple of observations:

\begin{enumerate}

\item {\em Self-duality.} If $A$ is positive definite, $c=0$, and if we choose $B=A$, then in view of \eqref{eq:primaliterates}  we have $x^k = y^k$ for all $k$, and hence Algorithms~\ref{alg:SDA} and \ref{alg:SDA-Primal} coincide. In this case, Algorithm~\ref{alg:SDA-Primal} can be described as {\em self-dual.}

\item {\em Space of iterates.} A direct consequence of the correspondence between the dual and primal iterates \eqref{eq:primaliterates}  is the following simple observation (a generalized version of this, which we prove later as Lemma~\ref{lem:error}, will be used  in the proof of Theorem~\ref{theo:Enormerror}): Choose $y^0\in \R^m$ and let $x^0 = c + B^{-1}A^\top y^0$. Then the iterates $\{x^k\}$ of Algorithm~\ref{alg:SDA-Primal} are of the form $x^k = c + B^{-1} A^\top y^k$ for some $y^k\in \R^m$.

\item {\em Starting point.} While we have defined the primal iterates of Algorithm~\ref{alg:SDA-Primal}  via a linear transformation of the dual iterates---see \eqref{eq:primaliterates}---we {\em can}, in principle,  choose $x^0$ arbitrarily, thus breaking the primal-dual connection which helped us to define the method. In particular, we can choose $x^0$ in such a way that there does not exist $y^0$ for which $x^0 = c + B^{-1}A^\top y^0$. As is clear from Theorem~\ref{theo:Enormerror}, in this case the iterates $\{x^k\}$ will not converge to $x^*$, but to $x^*+t$, where $t$ is the projection of $x^0-c$ onto the nullspace of $A$.
\end{enumerate}

\end{remark}

It turns out that Algorithm~\ref{alg:SDA-Primal} is equivalent to the {\em sketch-and-project} method \eqref{eq:sketchandproject} of Gower and Richt\'{a}rik~\cite{Gower2015b}:
\begin{equation}\label{eq:sketchandproject}x^{k+1} = \arg \min_{x} \left\{ \|x-x^k\|_B \;:\; S^\top Ax = S^\top b \right\},\end{equation}
where $S$ is a random matrix drawn in an i.i.d.\ fashion from a fixed distribution, just as in this work. 
In this method, the ``complicated'' system $Ax=b$ is first replaced by its sketched version $S^\top Ax =S^\top b$, the solution space of which contains all solutions of the original system. 
If $S$ has a few columns only, this system will be small and easy to solve. Then, progress is made by projecting the last iterate onto the sketched system. 

We now briefly comment on the relationship between \cite{Gower2015b} and our work.

\begin{itemize}

\item \textbf{Dual nature of sketch-and-project.} It was shown in \cite{Gower2015b} that Algorithm~\ref{alg:SDA-Primal} is equivalent to the sketch-and-project method. In fact, the authors of \cite{Gower2015b} provide five additional equivalent formulations of sketch-and-project, with  Algorithm~\ref{alg:SDA-Primal} being one of them. Here we show that their method can be seen as a primal process associated with SDA, which is a new method operating in the dual. By observing this,  we uncover a hidden dual nature of the sketch-and-project method.  For instance, this allows us to formulate and prove Theorem~\ref{theo:2}. No such results appear in \cite{Gower2015b}.

\item \textbf{No assumptions on the system matrix.} 
 In \cite{Gower2015b}  the authors only studied the convergence of the primal iterates $\{x^k\}$, establishing a (much) weaker variant of Theorem~\ref{theo:Enormerror}. Indeed,  convergence was only established in the case when $A$ has full column rank. In this work, we lift this assumption completely and hence establish complexity results in the  general case. 

\item \textbf{Convergence to a shifted point.} As we show in Theorem~\ref{theo:Enormerror},  Algorithm~\ref{alg:SDA-Primal}  converges to $x^*+t$, where $t$ is the projection of $x^0-c$ onto $\Null{A}$. Hence, in general, the method does not converge to the optimal solution $x^*$. This is not an issue if $A$ is of full column rank---an assumption used in the analysis in \cite{Gower2015b}---since then $\Null{A}$ is trivial and hence $t=0$. As long as $x^0-c$ lies in $\Range{B^{-1}A^\top}$, however, we have $x^k\to x^*$. This can be easily enforced (for instance, we can choose $x^0=c$).

\end{itemize}

\subsection{Relating the quality of the dual and primal iterates}

The following simple but insightful result (mentioned in the introduction) relates the ``quality'' of a dual vector $y$ with that of its primal counterpart, $x(y)$. It says that the dual suboptimality of $y$ in terms of function values is equal to the primal suboptimality of $x(y)$ in terms of distance.
 
\begin{proposition}\label{lem:correspondence} Let $y^*$ be any dual optimal point and $y\in \R^m$. Then
\[D(y^*) - D(y) = \tfrac{1}{2}\|x(y^*) - x(y)\|_B^2.\]
\end{proposition}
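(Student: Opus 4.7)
The plan is to reduce the claim to a purely algebraic identity about the quadratic form $M \eqdef AB^{-1}A^\top$. First I would compute the primal-side quantity: since $x(y) = c + B^{-1}A^\top y$, we have $x(y^*) - x(y) = B^{-1}A^\top(y^*-y)$, so
\[\tfrac{1}{2}\|x(y^*)-x(y)\|_B^2 = \tfrac{1}{2}(y^*-y)^\top A B^{-1} B B^{-1} A^\top (y^*-y) = \tfrac{1}{2}(y^*-y)^\top M (y^*-y).\]
Thus the goal reduces to showing $D(y^*) - D(y) = \tfrac{1}{2}(y^*-y)^\top M(y^*-y)$.

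Next I would expand the dual side using the definition of $D$:
\[D(y^*) - D(y) = (b-Ac)^\top(y^*-y) - \tfrac{1}{2}(y^*)^\top M y^* + \tfrac{1}{2}y^\top M y.\]
The key step, which removes all dependence on the data pair $(b,c)$, is to invoke the dual optimality characterization \eqref{eq:98hs8h9sss} from Proposition~\ref{eq:prop_opt_cond}, which gives $b-Ac = M y^*$. Substituting this yields
\[D(y^*) - D(y) = (y^*)^\top M(y^*-y) - \tfrac{1}{2}(y^*)^\top M y^* + \tfrac{1}{2}y^\top M y.\]
Completing the square on the right-hand side collapses this to $\tfrac{1}{2}(y^*-y)^\top M(y^*-y)$, matching the primal-side expression above.

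The only subtle point is that we applied an optimality condition stated in Proposition~\ref{eq:prop_opt_cond} for a primal-dual pair $(x,y)$, whereas here $y^*$ is an arbitrary dual optimum without an explicit primal counterpart. This is not an actual obstacle: by strong duality $y^*$ is paired with the primal optimum $x^*$, and Proposition~\ref{eq:prop_opt_cond} applied to this pair yields $Ax^* = b$ together with $x^* = c + B^{-1}A^\top y^*$, which combine to give $b - Ac = A B^{-1}A^\top y^* = M y^*$ as needed. Since the final identity depends on $y^*$ only through $M y^*$, it is independent of which optimal dual solution is chosen, consistent with the fact that the dual optimum is in general nonunique.
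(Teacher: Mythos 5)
Your proof is correct and follows essentially the same route as the paper's: expand $D(y^*)-D(y)$, substitute the dual optimality characterization $b-Ac=AB^{-1}A^\top y^*$ from \eqref{eq:98hs8h9sss}, and complete the square to recognize $\tfrac{1}{2}\|x(y^*)-x(y)\|_B^2$. The closing remark about which dual optimum is used is careful but not needed, since \eqref{eq:98hs8h9sss} is already stated as a characterization of \emph{every} dual optimal point.
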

\begin{proof}
Straightforward calculation shows that
\begin{eqnarray*}
D(y^*)  -D(y) &\overset{\eqref{eq:D}}{=}& (b-Ac)^\top (y^* - y) - \tfrac{1}{2}(y^*)^\top A B^{-1} A^\top y^*  + \tfrac{1}{2}y^\top A B^{-1} A^\top y\\
&\overset{\eqref{eq:98hs8h9sss}}{=}&(y^*)^\top A B^{-1} A^\top (y^* - y) - \tfrac{1}{2}(y^*)^\top A B^{-1} A^\top y^*  + \tfrac{1}{2}y^\top A B^{-1} A^\top y\\
&=& \tfrac{1}{2}(y-y^*)^\top AB^{-1} A^\top (y-y^*)\\
&\overset{\eqref{eq:98s98hs}}{=}& \tfrac{1}{2}\|x(y) - x(y^*)\|_B^2. 
\end{eqnarray*}
\qed
\end{proof}

Applying this result to sequence $\{(x^k,y^k)\}$ of dual iterates produced by SDA and their corresponding primal images, as defined in \eqref{eq:primaliterates}, we get the identity:
\[D(y^*)- D(y^k) = \tfrac{1}{2}\|x^k - x^*\|_B^2.\]
Therefore, {\em  dual convergence in function values $D(y^k)$  is equivalent to primal convergence in iterates $x^k$}. Furthermore, a direct computation leads to the following formula for the {\em duality gap}:
\begin{equation}\label{eq:dualitygap09709709}P(x^k ) - D(y^k) \overset{\eqref{eq:primaliterates}}{=} (AB^{-1}A^\top y^k + Ac - b)^\top y^k = -(\nabla D(y^k) )^\top y^k.\end{equation}

Note that computing the gap is significantly more expensive than the cost of a single iteration (in the interesting regime  when the number of columns of $S$ is small). Hence, evaluation of the duality gap should generally be avoided. If it is necessary to be certain about the quality of a solution however,  the above formula will be useful. The gap should then be computed from time to time only, so that this extra work does not significantly slow down the iterative process.

\section{Discrete Distributions} \label{sec:discrete}

Both the SDA algorithm and its primal counterpart are generic in the sense that the distribution  $\cal D$ is not specified beyond assuming that the matrix $H$ defined in \eqref{eq:H} is well defined and nonsingular. In this section we shall first characterize finite discrete distributions for which $H$ is nonsingular. We  then give a few examples of algorithms based on such distributions, and comment on our complexity results in more detail.

\subsection{Nonsingularity of $H$ for finite discrete distributions}

For simplicity, we shall focus on {\em finite discrete} distributions $\cal D$. That is, we set $S = S_i$ with probability $p_i>0$, where  $S_1,\dots,S_r$ are fixed matrices (each with $m$ rows).  The next theorem gives a necessary and sufficient condition for the matrix $H$ defined in \eqref{eq:H} to be nonsingular.
 
\begin{theorem}\label{thm:H} Let $\cal{D}$ be a finite discrete distribution, as described above. Then  $H$ is nonsingular if and only if
\[\Range{[S_1S_1^\top A, \cdots, S_r S_r^\top A]} = \R^m .\]
\end{theorem}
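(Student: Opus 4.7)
The plan is to use the structural fact that $H$ is a positive combination of symmetric positive semidefinite matrices and reduce the nonsingularity question to a null-space intersection. Specifically, write
\[
H = \sum_{i=1}^{r} p_i T_i, \qquad T_i \eqdef S_i \left(S_i^\top A B^{-1} A^\top S_i\right)^\dagger S_i^\top,
\]
and observe that each $T_i$ is symmetric PSD (pseudoinverse of a symmetric PSD matrix is symmetric PSD, and conjugation preserves this). With all $p_i>0$, a standard argument gives $\Null{H} = \bigcap_{i=1}^{r} \Null{T_i}$, so nonsingularity of $H$ is equivalent to $\bigcap_{i} \Null{T_i} = \{0\}$.

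The main step is to identify $\Null{T_i}$ explicitly. Set $M_i \eqdef S_i^\top A B^{-1} A^\top S_i$. A vector $y$ lies in $\Null{T_i}$ iff $y^\top T_i y = (S_i^\top y)^\top M_i^\dagger (S_i^\top y) = 0$, which, since $M_i^\dagger$ is PSD, is equivalent to $S_i^\top y \in \Null{M_i^\dagger} = \Null{M_i}$. I would then invoke the elementary lemma quoted in the paper (Lemma~\ref{lem:09709s}), which identifies nullspaces of Gram-type products, to conclude
\[
\Null{M_i} \;=\; \Null{B^{-1/2} A^\top S_i} \;=\; \Null{A^\top S_i},
\]
using that $B^{-1/2}$ is invertible. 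Therefore $y \in \Null{T_i}$ iff $A^\top S_i S_i^\top y = 0$, i.e.\ $\Null{T_i} = \Null{(S_i S_i^\top A)^\top}$.

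The remainder is bookkeeping. Combining the previous two steps,
\[
\Null{H} \;=\; \bigcap_{i=1}^{r} \Null{(S_i S_i^\top A)^\top} \;=\; \Null{[S_1 S_1^\top A, \ldots, S_r S_r^\top A]^\top}.
\]
Since for any matrix $M$ one has $\Null{M^\top} = \Range{M}^\perp$, we get
\[
\Null{H} \;=\; \Range{[S_1 S_1^\top A, \ldots, S_r S_r^\top A]}^\perp,
\]
so $H$ is nonsingular iff $\Range{[S_1 S_1^\top A, \ldots, S_r S_r^\top A]} = \R^m$, as claimed.

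The only non-routine point is the identification of $\Null{T_i}$: the appearance of the pseudoinverse could look intimidating, but handling it via the PSD quadratic-form characterization (so that we never need an explicit formula for $M_i^\dagger$) bypasses that difficulty cleanly. Everything else — PSD additivity of nullspaces, the null-space/range duality, and the $\Null{M_i} = \Null{A^\top S_i}$ identity — is standard linear algebra supplied by the appendix lemma.
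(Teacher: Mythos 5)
Your proof is correct and follows essentially the same route as the paper's: decompose $H$ into a positive combination of symmetric PSD terms, reduce nonsingularity to $\bigcap_i \Null{T_i}=\{0\}$, and identify each $\Null{T_i}$ with $\Null{A^\top S_i S_i^\top}$ via Lemma~\ref{lem:09709s}. The only (minor) difference is how you handle the pseudoinverse --- you use the quadratic form together with $\Null{M_i^\dagger}=\Null{M_i}$ for symmetric $M_i$, whereas the paper factors $\bigl(K_iK_i^\top\bigr)^\dagger=(K_i^\dagger)^\top K_i^\dagger$ with $K_i=S_i^\top AB^{-1/2}$ --- but this is a cosmetic variation, not a different argument.
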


\begin{proof} Let $K_i = S_i^\top AB^{-1/2}$. In view of the identity $\left(K_i K_i^\top \right)^{\dagger} = (K_i^\dagger )^\top K_i^\dagger$,  we can write 
\[H \overset{\eqref{eq:H}}{=} \sum_{i=1}^r H_i,\]
where $H_i = p_i S_i (K_i^\dagger)^\top K_i^\dagger S_i^\top$. Since $H_i$ are symmetric positive semidefinite,  so is $H$.  Now,  it is easy to check that $y^\top H_i y = 0$ if and only if $y  \in \Null{H_i}$ (this holds for any symmetric positive semidefinite $H_i$). Hence, $y^\top H y = 0$ if and only if $y \in \cap_i \Null{H_i}$ and thus
$H$ is positive definite if and only if 
\begin{equation} \label{eq:0h09sh0976}\bigcap_{i} \Null{H_i} = \{0\}.\end{equation}
In view of Lemma~\ref{lem:09709s}, $\Null{H_i} = \Null{\sqrt{p_i}K_i^\dagger S_i^\top} =  \Null{K_i^\dagger S_i^\top}$. Now, $y\in \Null{K_i^\dagger S_i^\top}$ if and only of $S_i^\top y \in \Null{K_i^\dagger} = \Null{K_i^\top} = \Null{A^\top S_i}$. Hence, $\Null{H_i} = \Null{A^\top S_i S_i^\top}$, which means that \eqref{eq:0h09sh0976} is equivalent to $\Null{[S_1S_1^\top A, \cdots, S_r S_r^\top A]^\top} = \{0\}$. \qed
\end{proof}

\bigskip
We have the following corollary.\footnote{We can also prove the corollary directly as follows: The first assumption implies that $S_i^\top A B^{-1} A^\top S_i$ is invertible for all $i$ and that $V \eqdef \mbox{Diag}\left(p_i^{1/2}(S_i^\top A{B^{-1}}A^\top S_i)^{-1/2}\right)$  is nonsingular. It remains to note that
\[
H \overset{\eqref{eq:H}}{=}  \E{ S\left(S^\top AB^{-1}A^\top S\right)^{-1} S^\top} \\
= \sum_i p_i  S_i\left(S_i^\top AB^{-1}A^\top S_i \right)^{-1} S_i^\top  = \bar{S}V^2 \bar{S}^\top.
\]
}

\begin{corollary}\label{cor:09hs09hs}
Assume that $S_i^\top A$ has full row rank for all $i$ and that  $\bar{S} \eqdef [S_1,\ldots, S_r]$ is of full row rank. Then $H$ is nonsingular. 
\end{corollary}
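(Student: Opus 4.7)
The plan is to follow the direct approach sketched in the footnote: factor $H$ as a congruence $\bar{S} V^2 \bar{S}^\top$ with $V$ nonsingular, whereupon nonsingularity of $H$ reduces to the full row rank hypothesis on $\bar{S}$.

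First I would argue that, under the hypothesis that $S_i^\top A$ has full row rank, each block matrix $M_i \eqdef S_i^\top A B^{-1} A^\top S_i$ is in fact symmetric positive definite, not merely positive semidefinite. This is a one-line check: if $v^\top M_i v = 0$ then positive definiteness of $B^{-1}$ forces $A^\top S_i v = 0$, i.e., $v \in \Null{(S_i^\top A)^\top} = \{0\}$ by the full row rank of $S_i^\top A$. Consequently $M_i$ is invertible, so its Moore--Penrose pseudoinverse in the definition \eqref{eq:H} of $H$ coincides with the ordinary inverse, and $M_i$ admits a symmetric positive definite square root $M_i^{1/2}$ with $M_i^{-1} = (M_i^{-1/2})^2$.

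Next I would define the block-diagonal matrix
\[ V \eqdef \mathrm{Diag}\!\left( p_1^{1/2} M_1^{-1/2},\, \ldots,\, p_r^{1/2} M_r^{-1/2} \right), \]
which is nonsingular since each diagonal block is invertible and each $p_i>0$. A short block-matrix calculation using $\bar{S} = [S_1,\ldots,S_r]$ gives
\[ \bar{S} V^2 \bar{S}^\top = \sum_{i=1}^r p_i S_i M_i^{-1} S_i^\top = \E{S (S^\top A B^{-1} A^\top S)^{\dagger} S^\top} = H, \]
where in the second equality we used that $M_i^\dagger = M_i^{-1}$ under our hypothesis.

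Finally, I would conclude: since $\bar{S}$ has full row rank, $\bar{S}^\top$ has trivial nullspace, and since $V^2$ is symmetric positive definite, the congruence $\bar{S} V^2 \bar{S}^\top$ is symmetric positive definite and hence nonsingular. There is no real obstacle here; the only point requiring a moment of care is the verification that the full row rank of $S_i^\top A$ upgrades positive semidefiniteness of $M_i$ to positive definiteness, so that the pseudoinverse collapses to an inverse and the clean block factorization $H = \bar{S} V^2 \bar{S}^\top$ becomes available. Alternatively, one could deduce the corollary from Theorem~\ref{thm:H} by checking that the range condition holds, but the direct factorization argument above is shorter and more transparent.
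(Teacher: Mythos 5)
Your proof is correct and follows essentially the same route as the paper's own (footnoted) argument: establish that each $S_i^\top A B^{-1}A^\top S_i$ is invertible so the pseudoinverse collapses to an inverse, factor $H = \bar{S}V^2\bar{S}^\top$ with $V$ the nonsingular block-diagonal matrix of scaled inverse square roots, and conclude positive definiteness from the full row rank of $\bar{S}$. The only difference is that you spell out the one-line verification that full row rank of $S_i^\top A$ upgrades $S_i^\top A B^{-1}A^\top S_i$ to positive definite, which the paper asserts without proof.
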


We now give a few illustrative examples:

\begin{enumerate}
\item \emph{Coordinate vectors.} Let $S_i = e_i$ ($i^{\text{th}}$ unit coordinate vector) for $i=1,2,\dots,r=m$. In this case,  $\bar{S} = [S_1,\dots,S_m]$ is the identity matrix in $\R^m$, and $S_i^\top A$ has full row rank for all $i$ as long as the rows of $A$ are all nonzero. By Corollary~\ref{cor:09hs09hs}, $H$ is positive definite.
\item \emph{Submatrices of the identity matrix.} We can let $S$ be a random column submatrix of the $m\times m$ identity matrix $I$. There are  $2^m-1$ such potential submatrices, and  we choose $1\leq r \leq 2^m-1$.  As long as we choose $S_1,\dots,S_r$ in such a way that each column of $I$ is represented in some matrix $S_i$, the matrix $\bar{S}$ will have full row rank.  Furthermore, if  $S_i^\top A$ has full row rank for all $i$, then by the above corollary, $H$ is nonsingular. Note that if the row rank of $A$ is $r$, then the matrices $S_i$ selected by the  above process will  necessarily have at most $r$ columns.

\item \emph{Count sketch and Count-min sketch.} Many other ``sketching'' matrices $S$ can be employed within SDA, including the count sketch \cite{CountSketch2002} and the count-min sketch \cite{CountMinSketch2005}. In our context (recall that we sketch with the transpose of $S$), $S$ is a count-sketch matrix (resp. count-min sketch) if it is assembled from random columns of $[I,-I]$ (resp $I$), chosen uniformly with replacement, where $I$ is the $m\times m$ identity matrix.

\end{enumerate}

\subsection{Randomized Kaczmarz as the primal process associated with  randomized coordinate ascent } \label{subsec:RKvsRCA}

Let $B=I$ (the identity matrix). The primal problem then becomes
\begin{eqnarray}  \text{minimize} && P(x)\eqdef \tfrac{1}{2}\|x-c\|^2 \notag\\
\text{subject to} && Ax=b \notag\\
&& x\in \R^n.\notag
\end{eqnarray}
and the dual problem is
\begin{eqnarray}\notag \text{maximize} && D(y)\eqdef (b-Ac)^\top y - \tfrac{1}{2}y^\top A A^\top y\\
\text{subject to}&& y \in \R^m. \notag
\end{eqnarray}

\paragraph{Dual iterates.} Let us choose $S=e^i$ (unit coordinate vector in $\R^m$) with probability $p_i>0$ (to be specified later). The SDA method (Algorithm~\ref{alg:SDA}) then takes the form
\begin{equation}\label{eq:SDA-compact08986986098}
\boxed{\quad y^{k+1} =  y^k + \frac{b_i - A_{i}c - A_{i:} A^\top y^k }{\|A_{i:}\|^2}e_i   \quad }
\end{equation}
This is the randomized coordinate ascent method applied to the dual problem. In the form popularized by Nesterov  \cite{Nesterov:2010RCDM}, it takes the form
\[y^{k+1} = y^k + \frac{e_i^\top \nabla D(y^k)}{L_i} e_i,\]
where $e_i^\top \nabla D(y^k)$ is the $i$th partial derivative of $D$ at $y^k$ and $L_i>0$ is the Lipschitz constant of the $i$th partial derivative, i.e., constant for which the following inequality holds for all $\lambda\in \R$:
\begin{equation}\label{eq:s97g98gs} | e_i^\top \nabla D(y + \lambda e_i) - e_i^\top \nabla D(y)  | \leq L_i |\lambda|.\end{equation}
It can be easily verified that \eqref{eq:s97g98gs}  holds with $L_i=\|A_{i:}\|^2$ and that $e_i^\top \nabla D(y^k)=b_i - A_{i:}c - A_{i:} A^\top y^k $.

\paragraph{Primal iterates.} The associated primal iterative process (Algorithm~\ref{alg:SDA-Primal})  takes the form
\begin{equation}\label{eq:SDA-primal009s09us0098}
\boxed{\quad x^{k+1} =  x^k - \frac{A_{i:} x^k   -b_i }{\|A_{i:}\|^2} A_{i:}^\top  \quad }
\end{equation}
This is the randomized Kaczmarz method of Strohmer and Vershynin \cite{SV:Kaczmarz2009}.

\paragraph{The rate.}

\bigskip
Let us now compute the rate $\rho$ as defined in \eqref{eq:rho}.
It will be convenient, but {\em not} optimal, to choose the probabilities via
 \begin{equation}\label{eq:089h08hs98xx}p_i = \frac{\norm{ A_{i:} }_2^2}{\norm{A}_F^2},\end{equation}  where $\|\cdot\|_F$ denotes the Frobenius norm (we assume that $A$ does not contain any zero rows).  Since
\[H \overset{\eqref{eq:H}}{=} \E{S \left(S^\top A A^\top S \right)^{\dagger}S^\top } =  \sum_{i=1}^m p_i \frac{e_i e_i^\top }{\|A_{i:}\|^2} \overset{\eqref{eq:089h08hs98xx}}{=} \frac{1}{\norm{A}_F^2}I,\]
we have
\begin{equation}\label{eq:98hs8h8ss}\rho = 
 1-\lambda_{\min}^+\left(A^\top H A \right) = 1-\frac{\lambda_{\min}^+\left(A^\top A\right)}{\norm{A}_F^2}.
\end{equation}
In general, the rate $\rho$ is a function of the probabilities $p_i$. The inverse problem: ``How to set the probabilities so that the rate is optimized?'' is difficult. If $A$ is of full column rank, however, it leads to a semidefinite program \cite{Gower2015b}.

Furthermore, if $r= \Rank{A}$, then in view of \eqref{eq:nubound}, the rate  is bounded  as
\[ 1- \frac{1}{r}\leq  \rho <1. \]
Assume that $A$ is of rank $r=1$ and let  $A= uv^\top$. Then $A^\top A = (u^\top  u) v  v^\top$, and hence this matrix is also of rank 1. Therefore, $A^\top A$ has a single nonzero eigenvalue, which is equal its trace. Therefore, $\lambda_{\min}^+(A^\top A) = \Tr{A^\top A} = \|A\|^2_F$ and hence $\rho =0$. Note that the  rate $\rho$ reaches its lower bound and the method converges in one step.

\paragraph{Remarks.} For randomized coordinate ascent applied to (non-strongly) concave quadratics, rate \eqref{eq:98hs8h8ss} has been established by Leventhal and Lewis \cite{Leventhal:2008:RMLC}. However, to the best of our knowledge, this is the first time this rate has also been established for the randomized Kaczmarz method. We do not only prove this, but show that this is because the iterates of the two methods are linked via a linear relationship. In the $c=0, B=I$ case, and for row-normalized matrix $A$, this linear relationship between the two methods was  recently independently observed by Wright~\cite{Wright:CoorDescMethods-survey}.  While all linear complexity results for  RK we are aware of require full rank assumptions, there exist nonstandard variants of RK which do not require such assumptions, one example being the asynchronous parallel version of RK studied by Liu, Wright and Sridhar \cite{Wright:AsyncPRK}.  Finally, no results of the type \eqref{eq:PRIMALSUBOPT} (primal suboptimality)  and \eqref{eq:GAPSUBOPT} (duality gap) previously existed for these methods in the literature. 

\subsection{Randomized block Kaczmarz is the primal process associated with randomized Newton}

Let $B=I$, so that we have the same pair of primal dual problems as in Section~\ref{subsec:RKvsRCA}.

\paragraph{Dual iterates.} Let us now choose $S$ to be a random column submatrix of the $m\times m$ identity matrix $I$. That is, we choose a  random subset $C\subset \{1,2,\dots,m\}$ and then let $S$ be the concatenation of columns $j\in C$ of $I$. We shall write $S=I_C$. Let $p_C$ be the probability that $S = I_C$. Assume that for each $j \in \{1,\dots,m\}$ there exists $C$ with $j \in C$ such that $p_C>0$. Such a random set is called {\em proper}~\cite{SDNA}.  

The SDA method (Algorithm~\ref{alg:SDA}) then takes the form
\begin{equation}\label{eq:SDA-compact08986986098BLOCK}
\boxed{\quad y^{k+1} =  y^k + I_C \lambda^k  \quad }
\end{equation}
where $\lambda^k$ is chosen so that the dual objective is maximized (see \eqref{eq:lambda_closed_form}). This is a variant of the {\em randomized Newton method} studied in \cite{SDNA}. By examining \eqref{eq:lambda_closed_form}, we see that this method works by ``inverting'' randomized submatrices of the ``Hessian'' $AA^\top$. Indeed, $\lambda^k$ is in each iteration computed by solving a system with the matrix $I_C^\top A A^\top I_C$. This is the random submatrix of $A A^\top$ corresponding to rows and columns in $C$.

\paragraph{Primal iterates.} In view of the equivalence between Algorithm~\ref{alg:SDA-Primal} and the sketch-and-project method \eqref{eq:sketchandproject}, the  primal iterative process associated with the randomized Newton method has the form
\begin{equation}\label{eq:SDA-primal009s09us0098BLOCK}
\boxed{\quad x^{k+1} = \arg \min_{x} \left\{ \|x-x^k\| \;:\; I_C^\top Ax = I_C^\top b \right\} \quad }
\end{equation}
This method is a variant of the {\em randomized block Kaczmarz} method of Needell \cite{Needell2012}. The method proceeds by projecting the last iterate $x^k$ onto a subsystem of $Ax=b$ formed by equations indexed by the set $C$.

\paragraph{The rate.} Provided that $H$ is nonsingular, the shared  rate of the randomized Newton and randomized block Kaczmarz methods is
\[\rho   = 1- \lambda_{\min}^+\left(A^\top \E{I_C\left(I_C^\top AA^\top I_C\right)^\dagger I_C^\top} A\right).\]

Qu et al \cite{SDNA} study the randomized Newton method for the problem of minimizing a smooth strongly convex function and prove linear convergence. In particular, they study the above rate in the case when $AA^\top$ is positive definite. Here we show that linear converges also holds for {\em weakly} convex quadratics (as long as $H$ is nonsingular). 

An interesting feature of the randomized Newton method, established in \cite{SDNA}, is  that when  viewed as a family of methods indexed by the size $\tau=|C|$, it enjoys superlinear speedup in $\tau$. That is, as $\tau$ increases by some factor, the iteration complexity drops by a factor that is at least as large.  It is possible to conduct a similar study in our setting  with a possibly singular matrix $AA^\top$, but such a study is not trivial and we therefore leave it for future research.

\subsection{Self-duality  for positive definite $A$}

If $A$ is positive definite, then we can choose $B=A$. As mentioned before, in this setting SDA is self-dual: $x^k=y^k$ for all $k$. The primal problem then becomes
\begin{eqnarray}  \text{minimize} && P(x)\eqdef \tfrac{1}{2}x^\top A x \notag\\
\text{subject to} && Ax=b \notag\\
&& x\in \R^n.\notag
\end{eqnarray}
and the dual problem becomes
\begin{eqnarray}\notag \text{maximize} && D(y)\eqdef b^\top y - \tfrac{1}{2}y^\top A  y\\
\text{subject to}&& y \in \R^m. \notag
\end{eqnarray}

Note that the primal objective function  does not play any role in determining the solution; indeed, the feasible set contains a single point only: $A^{-1}b$. However, it does affect the iterative process.
 
\paragraph{Primal and dual iterates.} As before, let us choose $S=e^i$ (unit coordinate vector in $\R^m$) with probability $p_i>0$, where the probabilities $p_i$ are arbitrary. Then both the  primal and the dual iterates take the form 
\[
\boxed{\quad y^{k+1} =  y^k - \frac{A_{i:}  y^k - b_i}{A_{ii}}e_i   \quad }
\]
This is the randomized coordinate ascent method applied to the dual problem. 

\paragraph{The rate.}

If we choose $p_i = A_{ii}/\Tr{A}$, then
\[H = \E{S\left(S^\top A S\right)^\dagger S^\top} = \frac{I}{\Tr{A}},\]
whence
\[\rho \overset{\eqref{eq:rho} }{=} 1 - \lambda_{\min}^+ \left(   A^{1/2}H A^{1/2}\right)  = 1 - \frac{ \lambda_{\min}(A)}{\Tr{A}}.\]

It is known that for this problem,  randomized coordinate descent applied to the dual problem, with this choice of probabilities, converges with this rate \cite{Leventhal:2008:RMLC}.

\section{Application: Randomized Gossip Algorithms} \label{sec:gossip}

In this section we apply our method and results to the distributed consensus (averaging) problem.

Let $(V,E)$ be a connected network with $|V|=n$ nodes and $|E|=m$ edges, where each edge is an unordered pair $\{i,j\} \in E$ of distinct nodes. Node $i \in V$ stores a private value $c_i\in \R$. The goal of a ``distributed consensus problem'' is for the network to compute the average of these private values in a distributed fashion \cite{RandGossip2006,OlshevskyTsitsiklis2009}. This means that the exchange of information  can only occur along the edges of the network.

The nodes may represent people in a social network, with edges representing friendship and private value representing certain private information, such as salary. The goal would be to compute the average salary via an iterative process where only friends are allowed to exchange information. The nodes may represent sensors in a wireless sensor network, with an edge between two sensors if they are close to each other so that they can communicate. Private values  represent measurements of some quantity performed by the sensors, such as the temperature. The goal is for the network to compute the average temperature.

\subsection{Consensus as a projection problem}

We now show how one can model the consensus (averaging) problem in the form \eqref{eq:P}. Consider the  projection problem
\begin{eqnarray}  \text{minimize} && \tfrac{1}{2}\|x - c\|^2 \notag \\
 \text{subject to} & & x_1=x_2=\cdots = x_n, \label{eq:ohs09hud98yd}
\end{eqnarray}
and note that the optimal solution $x^*$ must necessarily satisfy \[x^*_i=\bar{c}\eqdef \frac{1}{n}\sum_{i=1}^n c_i,\] for all $i$. There are many ways in which the constraint forcing all coordinates of $x$ to be equal can be represented in the form of a linear system $Ax=b$.   Here are some examples:

\begin{enumerate}
\item {\em Each node is equal to all its neighbours.} Let the equations of the system $Ax=b$ correspond to constraints \[x_i=x_j,\] for $\{i,j\}\in E$. That is, we are enforcing all pairs of vertices joined by an edge to have the same value.  Each edge $e\in E$ can be written in two ways: $e = \{i,j\}$ and $e=\{j,i\}$, where $i,j$ are the incident vertices. In order to avoid duplicating constraints,  for each edge $e\in E$ we use $e=(i,j)$ to denote an arbitrary but fixed order of its incident vertices $i,j$. We then let $A\in \R^{m\times n}$ and $b=0\in \R^m$, where  \begin{equation}\label{eq:89hs87s8ys}(A_{e:})^\top = f_i - f_j,\end{equation} and where  $e=(i,j)\in E$, $f_i$ (resp.\ $f_j$) is the $i^{\text{th}}$ (resp.\ $j^{\text{th}}$) unit coordinate vector in $\R^n$.  Note that the constraint $x_i=x_j$ is represented only once in the linear system. Further, note that the matrix \begin{equation}\label{eq:Laplacian09709}L = A^\top A \end{equation} is the {\em Laplacian} matrix of the graph $(V,E)$:
 \[L_{ij} = \begin{cases}
 d_i & i=j\\
 -1 & i\neq j,\,\, (i,j)\in E\\
 0 & \text{otherwise,}
 \end{cases}\]
 where $d_i$ is the degree of node $i$.

\item {\em Each node is the average of its neighbours.}  Let the equations of the system $Ax=b$ correspond to constraints 
\[x_i = \frac{1}{d_i}\sum_{j \in N(i)} x_j,\]
for $i\in V$, where $N(i)\eqdef \left\{ j\in V \,\,: \,\, \{i,j\} \in E\right\}$ is the set of neighbours of node $i$ and $d_i \eqdef |N(i)|$ is the degree of node $i$.  That is, we require that the values stored at each node are equal to the average of the values of its neighbours. This corresponds to the choice $b=0$ and \begin{equation}\label{eq:s8h98s78gd}(A_{i:})^\top = f_i - \frac{1}{d_i}\sum_{j \in N(i)}f_j.\end{equation} Note that $A\in \R^{n\times n}$.

\item {\em Spanning subgraph.} Let $(V,E')$ be any connected subgraph of $(V,E)$. For instance, we can choose a spanning tree. We can now apply any of the 2 models above to this new graph and either require $x_i=x_j$ for all $\{i,j\}\in E'$, or require the value $x_i$ to be equal to the average of the values $x_j$ for all neighbours $j$ of $i$ in $(V,E')$.
 
\end{enumerate}

Clearly, the above list does not exhaust the ways in which the constraint $x_1=\dots=x_n$ can be modeled as a linear system. For instance, we could build the system from constraints such as $x_1 = x_2 + x_4 -  x_3$, $x_1 = 5 x_2 - 4 x_7$ and so on.

Different representations of the constraint $x_1=\cdots=x_n$, in combination with a choice of $\cal D$, will lead to a wide range of specific algorithms for the consensus problem \eqref{eq:ohs09hud98yd}. Some (but not all) of these algorithms will have the property that communication only happens along the edges of the network, and these are the ones we are interested in. The number of combinations is very vast. We will therefore only highlight two options, with the understanding that based on this, the interested reader can assemble other specific methods as needed.

\subsection{Model 1: Each node is equal to its neighbours}

 Let $b=0$ and $A$ be as in \eqref{eq:89hs87s8ys}. Let the distribution $\cal D$ be defined by setting $S=e_i$  with probability $p_i>0$, where $e_i$ is the $i^{\text{th}}$ unit coordinate vector in $\R^m$. We have $B=I$, which means that Algorithm~\ref{alg:SDA-Primal} is the randomized Kaczmarz (RK) method \eqref{eq:SDA-primal009s09us0098} and Algorithm~\ref{alg:SDA} is the randomized coordinate ascent method  \eqref{eq:SDA-compact08986986098}. 

Let us take $y^0 = 0$ (which means that $x^0=c$), so that in Theorem~\ref{theo:Enormerror} we have $t=0$, and hence $x^k  \to x^*$. The particular choice of the starting point $x^0=c$ in the primal process has a very tangible meaning: for all $i$, node $i$ initially knows value $c_i$. The primal iterative process will dictate how the local values are modified in an iterative fashion so that eventually all nodes contain the optimal value $x^*_i = \bar{c}$.

\paragraph{Primal method.} In view of \eqref{eq:89hs87s8ys}, for each edge $e = (i,j)\in E$, we have  $\|A_{e:}\|^2=2$ and $A_{e:}x^k = x^k_i - x^k_j$. Hence, if the edge $e$ is selected by the RK method, \eqref{eq:SDA-primal009s09us0098} takes the specific form
\begin{equation}\label{eq:s98g98gsf66r6fs}\boxed{\quad x^{k+1} = x^k - \frac{x^k_i - x^k_j}{2} (f_i - f_j) \quad }\end{equation}
From \eqref{eq:s98g98gsf66r6fs} we see that only the $i^{\text{th}}$ and $j^{\text{th}}$ coordinates of $x^k$ are updated, via
\[x^{k+1}_i = x^k_i - \frac{x^k_i - x^k_j}{2} = \frac{x_i^k + x_j^k}{2}\]
and
\[x^{k+1}_j = x^k_j + \frac{x^k_i - x^k_j}{2} = \frac{x_i^k + x_j^k}{2}.\]
Note that in each iteration of RK, a random edge is selected, and the nodes on this edge replace their local values by their average. This is a basic variant of the {\em randomized gossip} algorithm \cite{RandGossip2006, ZouziasFreris2009}. 

\paragraph{Invariance.}
Let $f$ be the vector of all ones in $\R^n$ and notice that from \eqref{eq:s98g98gsf66r6fs} we obtain
$f^\top x^{k+1} = f^\top x^k$ for all $k$. This means that  for all $k\geq 0$ we have the invariance property:
\begin{equation}\label{eq:invariance}\sum_{i=1}^n x_i^k = \sum_{i=1}^n c_i.\end{equation}

\paragraph{Insights from the dual perspective.} We can now bring new insight into the randomized gossip algorithm by considering the  dual iterative process.  The dual method \eqref{eq:SDA-compact08986986098} maintains  weights $y^k$ associated with the edges of $E$ via the process:
\[y^{k+1} = y^k - \frac{A_{e:} (c - A^\top y^k)}{2}e_e,\]
where $e$ is a randomly selected edge. Hence, only the weight of a single edge is updated in each iteration. At optimality, we have
$x^* =c + A^\top y^*$. That is, for each $i$
\[\delta_i\eqdef \bar{c} - c_i = x_i^* - c_i =  (A^\top y^*)_i = \sum_{e\in E} A_{ei} y^*_e,\]
where $\delta_i$ is the correction term which needs to be added to $c_i$ in order for node $i$ to contain the value $\bar{c}$. From the above we observe that these correction terms are maintained by the dual method as an inner product of the $i^{\text{th}}$ column of $A$ and $y^k$, with the optimal correction being $\delta_i = A_{:i}^\top y^*$.

\paragraph{Rate.} Both Theorem~\ref{theo:Enormerror} and Theorem~\ref{theo:2} hold, and hence we automatically get several types of convergence for the randomized gossip method. In particular, to the best of our knowledge, no primal-dual type of convergence exist in the literature.  Equation~\eqref{eq:dualitygap09709709}  gives a stopping criterion certifying convergence via the duality gap, which is also new. 

In view of \eqref{eq:98hs8h8ss} and \eqref{eq:Laplacian09709}, and since $\|A\|_F^2 = 2m$, the convergence rate appearing in  all these complexity results is given by
\[\rho = 1 - \frac{\lambda_{\min}^+(L)}{2m},\]
where $L$ is the Laplacian of $(V,E)$. While it is know that the Laplacian is singular, the rate depends on the smallest nonzero eigenvalue. This means that the number of iterations needed to output an $\epsilon$-solution in expectation scales as $O(\left(2m/\lambda_{\min}^+(L)\right)\log(1/\epsilon))$, i.e., linearly with the number of edges.

\subsection{Model 2: Each node is equal to the average of its neighbours}

Let $A$ be as in \eqref{eq:s8h98s78gd} and $b=0$. Let the distribution $\cal D$ be defined by setting $S=f_i$  with probability $p_i>0$, where $f_i$ is the $i^{\text{th}}$ unit coordinate vector in $\R^n$. Again, we have $B=I$, which means that Algorithm~\ref{alg:SDA-Primal} is the randomized Kaczmarz (RK) method \eqref{eq:SDA-primal009s09us0098} and Algorithm~\ref{alg:SDA} is the randomized coordinate ascent method  \eqref{eq:SDA-compact08986986098}. As before, we choose $y^0=0$, whence $x^0=c$.

\paragraph{Primal method.} Observe that $\|A_{i:}\|^2 = 1 + 1/d_i$. The RK method \eqref{eq:SDA-primal009s09us0098} applied to this formulation of the problem takes the form
\begin{equation} \label{eq:sihiuhd098d7d}
\boxed{\quad x^{k+1} =  x^k - \frac{x^k_i - \frac{1}{d_i}\sum_{j\in N(i)}x^k_j  }{1 + 1/d_i} \left(f_i - \frac{1}{d_i}\sum_{j\in N(i)}f_j \right) \quad }
\end{equation}
where $i$ is chosen at random. This means that only coordinates in $i\cup N(i)$ get updated in such an iteration, the others remain unchanged. For node $i$ (coordinate $i$), this update is

\begin{equation}\label{eq:9g8g98gssdd} x^{k+1}_i = \frac{1}{d_i+1} \left( x_i^k + \sum_{j\in N(i)}x^k_j \right).\end{equation}
That is, the updated value at node $i$ is the average of the values of its neighbours and the previous value at $i$. From \eqref{eq:sihiuhd098d7d} we see that the values at nodes $j\in N(i)$ get updated as follows:

\begin{equation}\label{eq:98889ff} x^{k+1}_j = x_j^{k} + \frac{1}{d_i+1}\left(x_i^k - \frac{1}{d_i}\sum_{j'\in N(i)}x^k_{j'} \right).\end{equation}

\paragraph{Invariance.}  Let $f$ be the vector of all ones in $\R^n$ and notice that from \eqref{eq:sihiuhd098d7d} we obtain
\[f^\top x^{k+1} = f^\top x^k -\frac{x^k_i - \frac{1}{d_i}\sum_{j\in N(i)}x^k_j  }{1 + 1/d_i} \left(1 - \frac{d_i}{d_i} \right)=f^\top x^k, \]
for all $k$.

It follows that the method satisfies the invariance property \eqref{eq:invariance}.

\paragraph{Rate.} The method converges with the rate $\rho$ given by \eqref{eq:98hs8h8ss}, where $A$ is given by \eqref{eq:s8h98s78gd}.   If $(V,E)$ is a complete graph (i.e., $m=\tfrac{n(n-1)}{2}$), then $L = \tfrac{(n-1)^2}{n}A^\top A$ is the Laplacian. In that case,  $\|A\|_F^2 = \Tr{A^\top A} = \tfrac{n}{(n-1)^2}\Tr{L}=\tfrac{n}{(n-1)^2}\sum_i d_i  = \tfrac{n^2}{n-1}$ and hence
\[\rho \overset{\eqref{eq:s8h98s78gd}}{=} 1 - \frac{\lambda_{\min}^+(A^\top A)}{\|A\|_F^2} = 1 - \frac{\tfrac{n}{(n-1)^2}\lambda_{\min}^+(L)}{\tfrac{n^2}{n-1}} = 1 - \frac{\lambda_{\min}^+(L)}{2m}.\]

\section{Proof of Theorem~\ref{theo:Enormerror}} \label{sec:proof}

In this section we prove Theorem~\ref{theo:Enormerror}. We proceed as follows:  in Section~\ref{subsec:error} we characterize the space in which the iterates move, in Section~\ref{subsec:inequality} we establish a certain key technical  inequality, in Section~\ref{subsec:convergence} we establish  convergence of iterates, in Section~\ref{subsec:residual} we derive a rate for the residual and finally, in Section~\ref{subsec:lower_bound} we establish the lower bound on the convergence rate.

\subsection{An error lemma} \label{subsec:error}

The following result describes the space in which the iterates move.   It is an extension of the observation in Remark~\ref{lem:5shsuss} to the case when $x^0$ is chosen arbitrarily.

\begin{lemma} \label{lem:error} Let the assumptions of Theorem~\ref{theo:Enormerror} hold. For all $k\geq 0$ there exists $w^k\in \R^m$ such that
$e^k \eqdef  x^k - x^* - t = B^{-1}A^\top w^k$.
\end{lemma}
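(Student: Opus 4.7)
The plan is to prove the statement by induction on $k$, with the base case carrying all the content and the inductive step being essentially algebraic.

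For the base case $k=0$, I would write
\[ e^0 = x^0 - x^* - t = (x^0 - c - t) - (x^* - c) = (x^0 - c - t) - B^{-1}A^\top y^*, \]
using the optimality identity $x^* = c + B^{-1}A^\top y^*$ from \eqref{eq:opt_primal}. The second summand is already of the desired form, so the task reduces to showing that $x^0 - c - t \in \Range{B^{-1}A^\top}$. This is the one genuine point that needs justification: since $t$ is the $B$-projection of $x^0-c$ onto $\Null{A}$ (see \eqref{eq:def_of_t}), the residual $x^0 - c - t$ is $B$-orthogonal to $\Null{A}$. I would then record the elementary linear-algebra fact that the $B$-orthogonal complement of $\Null{A}$ coincides with $\Range{B^{-1}A^\top}$: indeed $v \perp_B \Null{A}$ iff $Bv \in (\Null{A})^\perp = \Range{A^\top}$ iff $v \in \Range{B^{-1}A^\top}$. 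Thus $x^0 - c - t = B^{-1}A^\top u$ for some $u\in \R^m$, and $w^0 \eqdef u - y^*$ works.

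For the inductive step, suppose $e^k = B^{-1}A^\top w^k$. Subtracting $x^* + t$ from both sides of the SDA-primal update \eqref{eq:SDA-primal0} gives
\[ e^{k+1} = e^k - B^{-1}A^\top S \left(S^\top A B^{-1} A^\top S\right)^\dagger S^\top (Ax^k - b), \]
and the right-hand side factors as $B^{-1}A^\top\bigl[w^k - S(S^\top A B^{-1} A^\top S)^\dagger S^\top (Ax^k - b)\bigr]$. Defining $w^{k+1}$ to be this bracketed expression completes the induction.

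The only mild obstacle is the identification $\Range{B^{-1}A^\top} = (\Null{A})^{\perp_B}$, and this is a short calculation using the symmetric positive definiteness of $B$; everything else is bookkeeping. The rest of the proof of Theorem~\ref{theo:Enormerror} can then exploit this lemma by working with the ``compressed'' variable $w^k$, on which the relevant operator $B^{-1/2}A^\top H A B^{-1/2}$ will act nondegenerately along the range of $A^\top$, enabling a contraction argument even without full-rank assumptions on $A$.
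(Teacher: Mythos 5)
Your proof is correct and follows essentially the same route as the paper: induction on $k$, with the base case resting on the decomposition of $x^0-c$ into its $B$-projection $t$ onto $\Null{A}$ plus a component in $\Range{B^{-1}A^\top}$, and an identical algebraic inductive step. The only cosmetic difference is that you re-derive the identification $(\Null{A})^{\perp_B}=\Range{B^{-1}A^\top}$ directly, whereas the paper cites its appendix decomposition lemma (Lemma~\ref{eq:decomposition}) for the same fact.
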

\begin{proof}
We proceed by induction. Since by definition, $t$ is the projection of $x^0-c$ onto  $\Null{A}$ (see \eqref{eq:98hs8htt}), applying Lemma~\ref{eq:decomposition} we know that $x^0 -c = s+ t$, where $s = B^{-1}A^\top \hat{y}^0$ for some $\hat{y}^0\in \R^m$. Moreover, in view of \eqref{eq:opt_primal}, we know that $x^* =c + B^{-1}A^\top y^*$, where $y^*$ is any dual optimal solution. Hence, 
\[e^0  = x^0 - x^* - t = B^{-1}A^\top (\hat{y}^0-y^*).\]
Assuming the relationship holds for $k$, we have
\begin{eqnarray*}e^{k+1} & = & x^{k+1} - x^* - t\\
 &\overset{(\text{Alg}~\ref{alg:SDA-Primal})}{=} & \left[x^k - B^{-1}A^\top S (S^\top A B^{-1} A^\top S)^\dagger S^\top (A x^k  -b ) \right] - x^*-t\\
&= &  \left[x^* + t + B^{-1}A^\top w^k - B^{-1}A^\top S (S^\top A B^{-1} A^\top S)^\dagger S^\top (A x^k  -b )\right] - x^* - t\\
&=& B^{-1}A^\top w^{k+1},
\end{eqnarray*}
where $w^{k+1} = w^k - S (S^\top A B^{-1} A^\top S)^\dagger S^\top (A x^k  -b )$. \qed
\end{proof}

\subsection{A key inequality} \label{subsec:inequality}

The following inequality is of key importance in the proof of the main theorem.

\begin{lemma}\label{lem:WGWtight}
Let $0\neq W \in  \R^{m\times n}$ and
$G \in \R^{m\times m}$ be symmetric positive definite. Then the matrix $W^\top G W$ has a positive eigenvalue, and the following inequality holds for all $y\in \R^m$:
\begin{equation}\label{eq:WGWtight}
 y^\top WW^\top G WW^\top y \geq \lambda_{\min}^+(W^\top G W)\|W^\top y\|^2.
 \end{equation}
 Furthermore, this bound is tight.
\end{lemma}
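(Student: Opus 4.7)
My plan is to reduce the inequality to a standard Rayleigh-quotient bound applied to the matrix $M \eqdef W^\top G W$ after showing that the vector $z \eqdef W^\top y$ lies in the range of $M$. First I would observe that $M$ is symmetric positive semidefinite, since $v^\top M v = \|G^{1/2}Wv\|^2 \geq 0$, and that $M$ admits a positive eigenvalue: indeed, $M=0$ would force $Wv=0$ for all $v$ (because $G^{1/2}$ is nonsingular), contradicting $W\neq 0$. So $\lambda_{\min}^+(M)$ is well defined.

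Next I would rewrite the left-hand side of \eqref{eq:WGWtight} as
\[
y^\top W W^\top G W W^\top y = z^\top M z, \qquad z = W^\top y,
\]
so the claim becomes $z^\top M z \geq \lambda_{\min}^+(M)\,\|z\|^2$ for $z\in\Range{W^\top}$. The crucial step is to show that $z\in \Range{M}$. From $v^\top M v = \|G^{1/2}Wv\|^2$ we get $\Null{M}=\Null{W}$, and then by the standard orthogonal decomposition $\Range{W^\top}=\Null{W}^\perp = \Null{M}^\perp = \Range{M}$ (the last equality using that $M$ is symmetric). Hence $z$ lies in $\Range{M}$.

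Once $z\in \Range{M}$ is established, the Rayleigh bound is routine: expanding $z$ in an orthonormal eigenbasis of $M$ and noting that only components along eigenvectors associated with strictly positive eigenvalues appear, we have
\[
z^\top M z = \sum_{i:\lambda_i>0} \lambda_i\,\alpha_i^2 \;\geq\; \lambda_{\min}^+(M)\sum_{i:\lambda_i>0}\alpha_i^2 \;=\; \lambda_{\min}^+(M)\,\|z\|^2,
\]
which is the desired inequality. For tightness, I would pick $y$ so that $W^\top y$ equals a unit eigenvector of $M$ associated with $\lambda_{\min}^+(M)$; such a $y$ exists precisely because this eigenvector sits in $\Range{M}=\Range{W^\top}$, and plugging it in turns \eqref{eq:WGWtight} into an equality.

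I do not anticipate a serious obstacle: the only subtle point is the identification $\Range{W^\top}\subseteq \Range{M}$, which could easily be overlooked since without it the inequality would fail whenever $W^\top y$ has a component in $\Null{M}$. The remainder of the argument is a clean application of the spectral theorem.
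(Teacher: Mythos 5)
Your proof is correct and follows essentially the same route as the paper: the paper also establishes $W^\top y\in\Range{W^\top G W}$ (via its Lemma~A.1, whose proof is exactly your null-space/orthogonal-complement argument) and then applies the Rayleigh bound in an orthonormal eigenbasis of the positive eigenvalues. Your explicit justification that the minimal positive eigenvector lies in $\Range{W^\top}$, so that a tight $y$ exists, is a small point the paper leaves implicit.
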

\begin{proof} Fix arbitrary $y\in \R^m$. By Lemma~\ref{lem:09709s}, $W^\top y\in \Range{W^\top G W}$. Since, $W$ is nonzero, the positive semidefinite matrix $W^\top G W$ is also nonzero, and hence it has a positive eigenvalue. Hence, $\lambda_{\min}^+(W^\top G W)$ is well defined. 
Let $\lambda_{\min}^+(W^\top G W) = \lambda_{1}\leq \cdots\leq  \lambda_{\tau}$ be the positive eigenvalues of $W^\top GW$, with associated orthonormal eigenvectors $q_1,\dots,q_\tau$. We thus have
\[W^\top G W = \sum_{i=1}^\tau \lambda_i q_i q_i^\top.\] It is easy to see that these eigenvectors span $\Range{W^\top GW}$. Hence,  we can write $W^\top y = \sum_{i=1}^{\tau} \alpha_i q_i$ and therefore
\[y^\top WW^\top G W W^\top y = \sum_{i=1}^{\tau} \lambda_{i}\alpha_i^2 \geq  
\lambda_1\sum_{i=1}^{\tau} \alpha_i^2 = \lambda_1 \|W^\top y\|^2.\]
Furthermore this bound is tight, as can be seen by selecting $y$ so that $W^\top y = q_1$. \qed
\end{proof}

\subsection{Convergence of the iterates} \label{subsec:convergence}

Subtracting $x^*+t$ from both sides of the update step of Algorithm~\ref{alg:SDA-Primal}, and letting \[Z= Z_{S^\top A} \overset{\eqref{eq:Z_A}}{=} A^\top S (S^\top A B^{-1} A^\top S)^\dagger S^\top  A,\] we obtain the identity
\begin{equation} \label{eq:fixed0}
 x^{k+1} - (x^{*}+t) = (I-B^{-1}Z)(x^k-(x^{*}+t)),
 \end{equation}
 where we used that $t\in \Null{A}$.  Let 
 \begin{equation}\label{eq:h98sh9sd}e^k = x^k - (x^{*}+t)\end{equation} and note that in view of \eqref{eq:H}, $\E{Z} = A^\top H A$. Taking norms and expectations (in $S$) on both sides of~\eqref{eq:fixed0}  gives 
\begin{eqnarray}
\E{\norm{e^{k+1}}_{B}^2\, | \, e^k} &=& 
\E{\norm{(I-{B^{-1}}Z)e^k }^2_B} \nonumber \\
&\overset{(\text{Lemma}~\ref{eq:decomposition}, \text{ Equation}~\eqref{eq:iuhiuhpp} )}{=}& \E{(e^k)^\top (B-Z) e^k} \nonumber\\
& =& \norm{e^k}_{B}^2-
(e^k)^\top \E{Z}e^k \nonumber \\
&=& \norm{e^k}_{B}^2- (e^k)^\top A^\top H A e^k, \label{eq:theostep1}
\end{eqnarray}
where in the second step we have used \eqref{eq:iuhiuhpp} from Lemma~\ref{eq:decomposition}  with $A$ replaced by $S^\top A$. In view of Lemma~\ref{lem:error}, let $w^k\in \R^{m}$ be such that $e^k = B^{-1}A^\top w^k.$  Thus
\begin{eqnarray}
(e^k)^\top A^\top H A e^k \quad &=&(w^k)^\top AB^{-1}A^\top H A B^{-1}A^\top w^k\nonumber \\
&\overset{(\text{Lemma}~\ref{lem:WGWtight})}{\geq}& \lambda_{\min}^+(B^{-1/2}A^\top H A B^{-1/2})\cdot
\|B^{-1/2}A^\top w^k\|^2 \nonumber\\
&= &(1-\rho) \cdot \|B^{-1}A^\top w^k\|_B^2 \\
&=& (1- \rho) \cdot\norm{e^k}_B^2,\label{eq:theostep2}
\end{eqnarray}
where we applied Lemma~\ref{lem:WGWtight} with $W = AB^{-1/2}$ and $G = H$,  so that $W^\top GW = B^{-1/2}A^\top H A B^{-1/2}.$
Substituting~\eqref{eq:theostep2} into~\eqref{eq:theostep1} gives
$\E{\norm{e^{k+1}}_{B}^2\, | \, e^k} \leq  \rho \cdot \norm{e^k}_B^2$.
Using the tower property of expectations, we obtain the recurrence
\[\E{\norm{e^{k+1}}_{B}^2} \leq  \rho \cdot \E{\norm{e^k}_B^2}. \]
To prove~\eqref{eq:Enormerror} it remains to  unroll the recurrence.

\subsection{Convergence of the residual} \label{subsec:residual}

We now prove \eqref{eq:s98h09hsxxx}. Letting $V_k = \|x^k-x^*-t\|_B^2$, we have
\begin{eqnarray*}
\E{\|Ax^k - b\|_B} &=& \E{\|A(x^k-x^*-t) + At\|_B}\\
&\leq &\E{\|A(x^k-x^*-t) \|_B} + \|At\|_B\\
&\leq &  \|A\|_B \E{\sqrt{V_k}} + \|At\|_B\\
&\leq &\|A\|_B \sqrt{\E{V_k}} + \|At\|_B\\
&\overset{\eqref{eq:Enormerror}}{\leq}&\|A\|_B \sqrt{\rho^k V_0 } + \|At\|_B,
 \end{eqnarray*} 
 where in the step preceding the last one we have used Jensen's inequality.

\subsection{Proof of the lower bound}\label{subsec:lower_bound}

Since $\E{Z} = A^\top H A$, using Lemma~\ref{lem:09709s} with $G=H$ and $W=A B^{-1/2}$ gives
\begin{align*}
\Range{B^{-1/2}\E{Z}B^{-1/2}}=\Range{B^{-1/2}A^\top},
\end{align*}
from which we deduce that \begin{eqnarray*}\Rank{A} &=&
 \dim\left(\Range{A^\top}\right) \\
&=& \dim\left(\Range{B^{-1/2}A^\top}\right)\\
&=&  \dim\left(\Range{B^{-1/2}\E{Z}B^{-1/2}}\right)\\
&=& \Rank{B^{-1/2}\E{Z}B^{-1/2}}.\end{eqnarray*}

Hence, $\Rank{A}$ is equal to the number of nonzero eigenvalues of $B^{-1/2}\E{Z}B^{-1/2}$, from which we immediately obtain the bound
\begin{eqnarray*}
\Tr{B^{-1/2}\E{Z}B^{-1/2}} &\geq& \Rank{A }\, \lambda_{\min}^+(B^{-1/2}\E{Z}B^{-1/2}). 
\end{eqnarray*}
In order to obtain \eqref{eq:nubound},  it only remains to combine the above inequality with
\[
\E{\Rank{S^\top A}}  \overset{ \eqref{eq:ugisug7sss}}{=} \E{\Tr{B^{-1}Z}}  =  \E{\Tr{B^{-1/2}Z B^{-1/2}}} \\
= \Tr{B^{-1/2}\E{Z}B^{-1/2}}. 
\]

\section{Proof of Theorem~\ref{theo:2}} \label{sec:proof2}

In this section we prove Theorem~\ref{theo:2}. We dedicate a subsection to each of the three complexity bounds.

\subsection{Dual suboptimality}

Since $x^0\in c+\Range{B^{-1}A^\top}$, we have $t=0$ in Theorem~\ref{theo:Enormerror}, and hence \eqref{eq:Enormerror} says that \begin{equation}\label{eq:s98h98shs}\E{U_k} \leq \rho^k U_0.\end{equation}
It remains to apply Proposition~\ref{lem:correspondence}, which says that $U_k = D(y^*)-D(y^k)$.

\subsection{Primal suboptimality}

Letting $U_k = \tfrac{1}{2}\|x^k- x^*\|_B^2$, we can write
\begin{eqnarray}
P(x^k) - OPT &=& \tfrac{1}{2}\|x^k - c\|_B^2 - \tfrac{1}{2}\|x^* - c\|_B^2\notag\\
&=& \tfrac{1}{2}\|x^k - x^* + x^* - c\|_B^2  - \tfrac{1}{2}\|x^* - c\|_B^2\notag\\
&=& \tfrac{1}{2}\|x^k- x^*\|_B^2 +  (x^k-x^*)^\top B (x^*-c)\notag\\
&\leq & U_k + \|x^k-x^*\|_B \|B(x^*-c)\|_{B^{-1}} \notag\\
&=& U_k +\|x^k-x^*\|_B  \|x^*-c\|_B \notag \\
&= & U_k + 2 \sqrt{U_k} \sqrt{OPT}.\label{eq:iuhs89h98s6s}
\end{eqnarray}
By taking expectations on both sides of \eqref{eq:iuhs89h98s6s}, and using Jensen's inequality, we therefore obtain
\[\E{P(x^k)-OPT} \leq \E{U_k} + 2\sqrt{OPT} \sqrt{\E{U_k}} \overset{\eqref{eq:s98h98shs}}{\leq} \rho^k U_0 + 2 \rho^{k/2} \sqrt{OPT \times U_0},\]
which establishes the bound on primal suboptimality \eqref{eq:PRIMALSUBOPT}. 

\subsection{Duality gap}

Having established rates for primal and dual suboptimality, the rate for the duality gap follows easily:
\begin{eqnarray*}
\E{P(x^k) - D(y^k)} &=& \E{P(x^k) - OPT + OPT - D(y^k)}\\
&=&  \E{P(x^k) - OPT} + \E{OPT - D(y^k)}\\
&\overset{\eqref{eq:DUALSUBOPT}  + \eqref{eq:PRIMALSUBOPT}}{=}& 2 \rho^k U_0 + 2 \rho^{k/2} \sqrt{OPT \times U_0}.
\end{eqnarray*}

\section{Numerical Experiments: Randomized Kaczmarz Method with Rank-Deficient System}

To illustrate some of the novel aspects of our theory, we perform numerical experiments with the Randomized Kaczmarz method~\eqref{eq:SDA-primal009s09us0098} (or equivalently the randomized coordinate ascent method applied to the dual problem~\eqref{eq:D}) and compare the empirical convergence to the convergence predicted by our theory. We test several randomly generated rank-deficient systems and compare the evolution of the empirical primal error $\norm{x^k-x^*}_2^2/\norm{x^0-x^*}_2^2$ to the convergence dictated by the rate $\rho = 1-\lambda_{\min}^+\left(A^\top A\right)/\norm{A}_F^2$ given in~\eqref{eq:98hs8h8ss} and the lower bound $1-1/\Rank{A}\leq\rho$. From Figure~\ref{fig:rand} we can see that the RK method converges despite the fact that the linear systems are rank deficient. While previous results do not guarantee that RK converges for rank-deficient matrices, our theory does as long as the system matrix has no zero rows. Furthermore, we observe that the lower the rank of the system matrix, the faster the  convergence of the RK method, and moreover, the closer the empirical convergence is to the convergence dictated by the rate $\rho $ and lower bound on $\rho$. In particular, on the low rank system in Figure~\ref{fig:randa}, the empirical convergence is very close to both the convergence dictated by $\rho$ and the lower bound.
While on the full rank system in Figure~\ref{fig:randd}, the convergence dictated by $\rho$ and the lower bound on $\rho$ are no longer an accurate estimate of the empirical convergence.

\begin{figure}[!h]
    \centering
\begin{subfigure}[t]{0.45\textwidth}
        \centering
\includegraphics[width =  \textwidth, trim= 80 270 80 280, clip ]{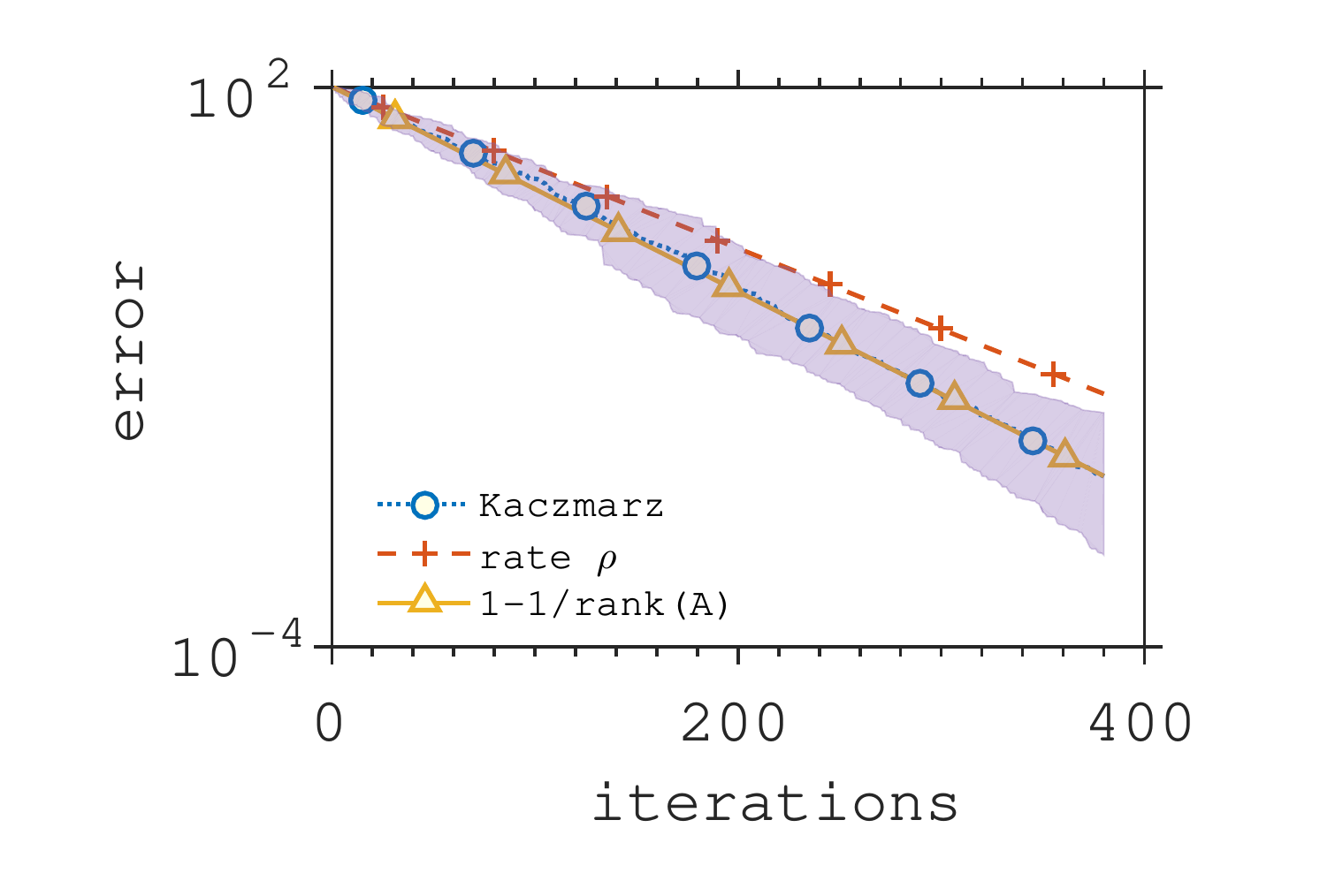}
        \caption{$\Rank{A}= 40$}\label{fig:randa}
\end{subfigure}%
\begin{subfigure}[t]{0.45\textwidth}
        \centering
\includegraphics[width =  \textwidth, trim= 80 270 80 280, clip ]{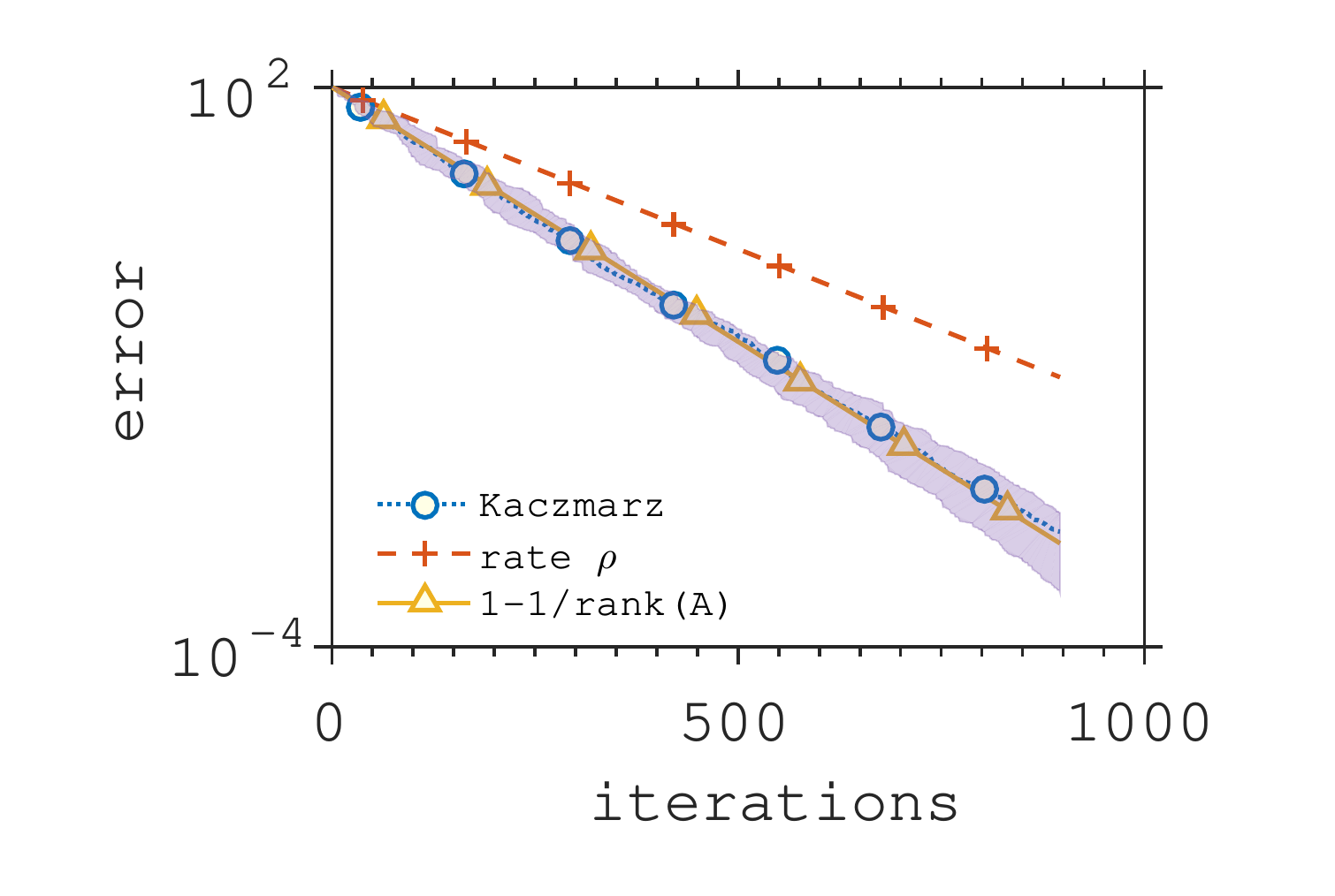}
        \caption{$\Rank{A} = 80$}\label{fig:randb}
\end{subfigure}\\%
\begin{subfigure}[t]{0.45\textwidth}
        \centering
\includegraphics[width =  \textwidth, trim= 80 270 80 280, clip ]{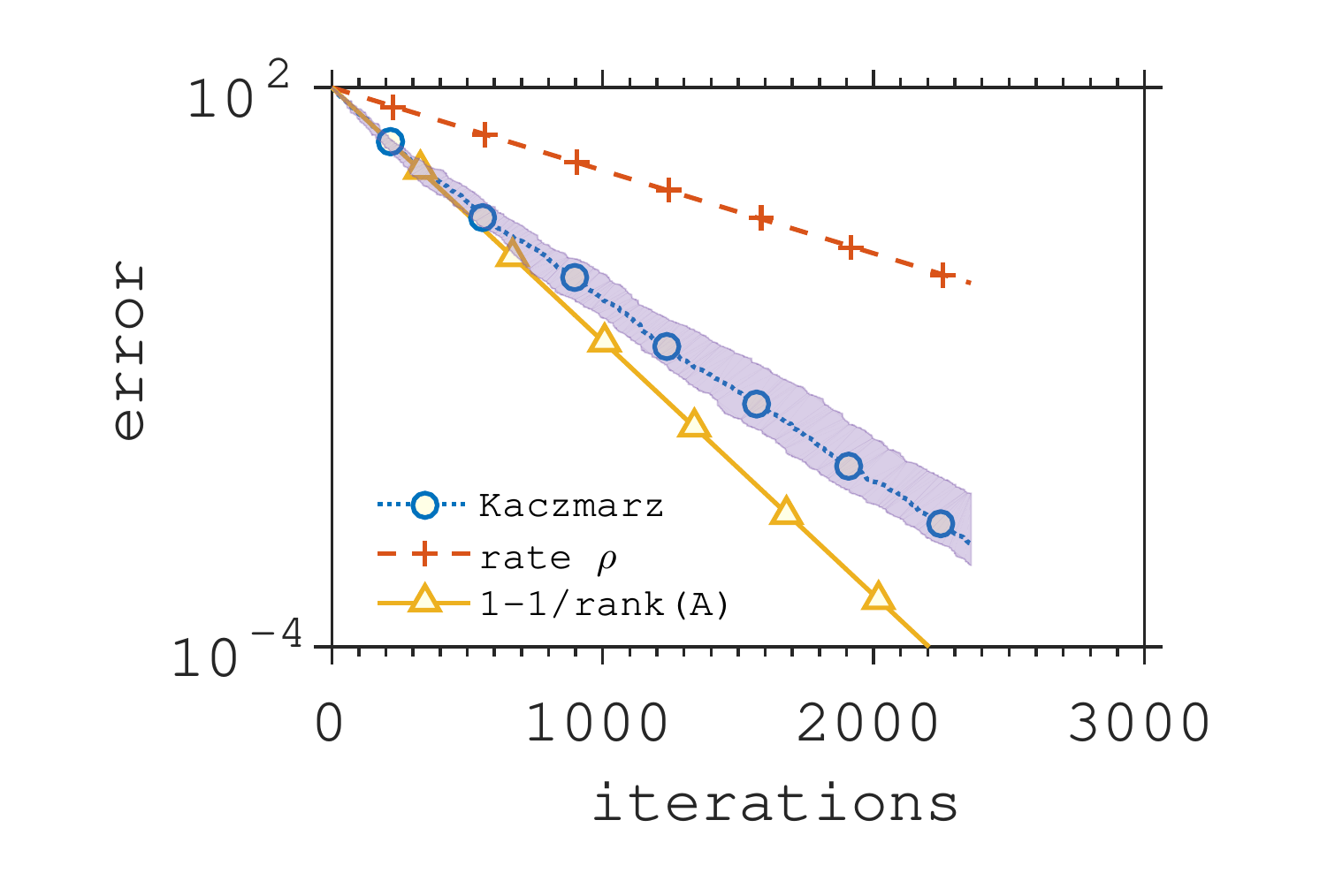}
        \caption{$\Rank{A} = 160$}\label{fig:randc}
\end{subfigure}%
\begin{subfigure}[t]{0.45\textwidth}
        \centering
\includegraphics[width =  \textwidth, trim= 80 270 80 280, clip ]{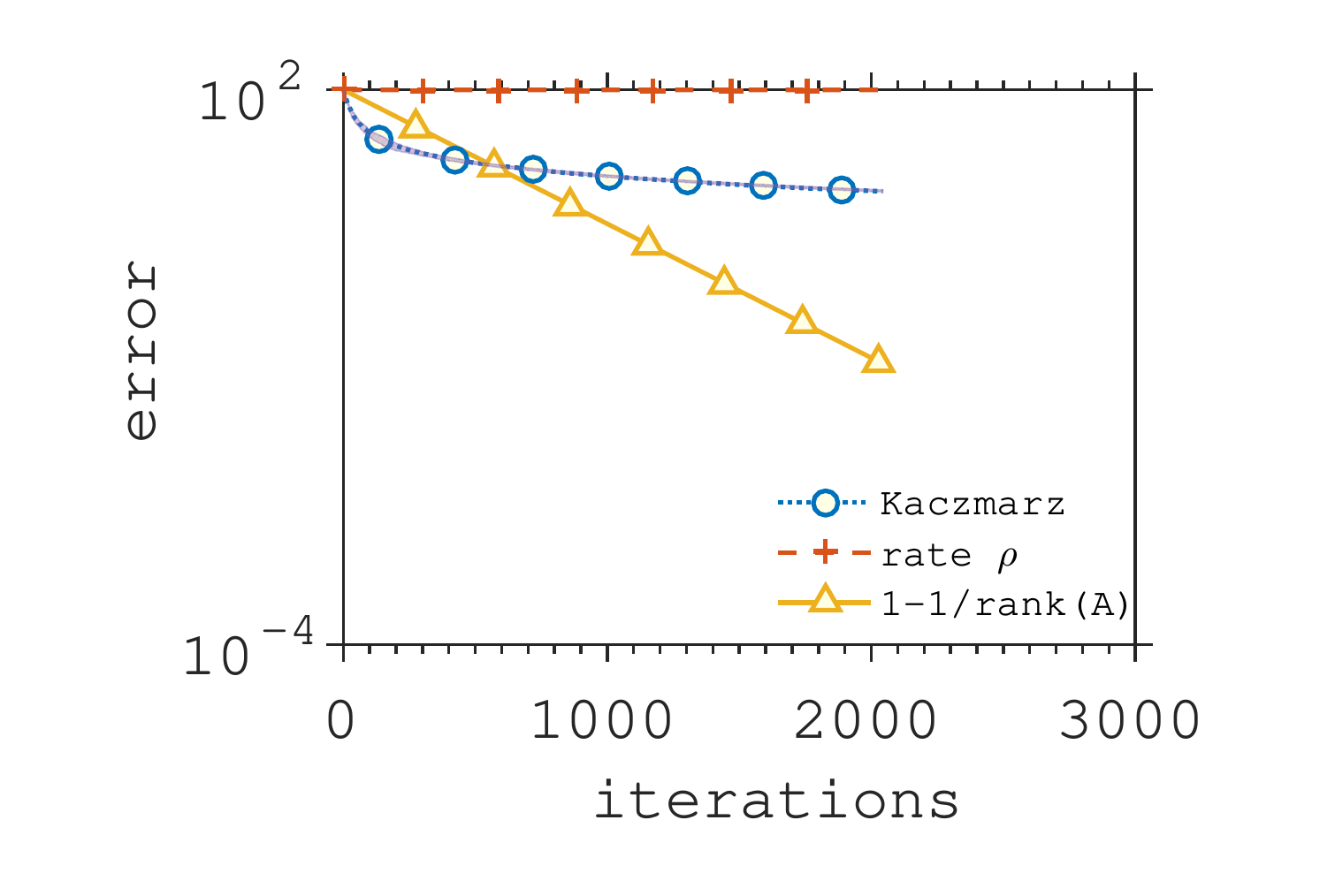}
        \caption{$\Rank{A} = 300$}\label{fig:randd}
\end{subfigure}%
    \caption{Synthetic MATLAB generated problems.  Rank deficient matrix $A~=~\sum_{i=1}^{\Rank{A}} \sigma_i u_i v_i^T $ where $\sum_{i=1}^{300} \sigma_i u_i v_i^T =$\texttt{rand}$(300,300)$ is an svd decomposition of a $300\times 300$ uniform random matrix. We repeat each experiment ten times. The blue shaded region is the $90\%$ percentile of relative error achieved in each iteration.
    }\label{fig:rand}
\end{figure}

\section{Conclusion} \label{sec:conclusion}

We have developed a versatile and powerful  algorithmic framework for solving linear systems: {\em stochastic dual ascent (SDA)}. In particular, SDA finds the projection of a given point, in a fixed but arbitrary Euclidean norm, onto the solution space of the system.  Our method is  dual in nature, but can also be described in terms of primal iterates via a simple affine transformation of the dual variables. Viewed as a dual method, SDA belongs to a  novel class of randomized optimization algorithms:  it updates the current iterate by adding the product of a random matrix, drawn independently from a fixed distribution, and a vector. The update is chosen as the best point lying in the random subspace spanned by the columns of this random matrix. 

While SDA is the first method of this type, particular choices for the distribution of the random matrix lead to several known algorithms: randomized coordinate descent \cite{Leventhal:2008:RMLC} and randomized Kaczmarz \cite{SV:Kaczmarz2009} correspond to a discrete distribution over the columns of the identity matrix, randomized Newton method \cite{SDNA} corresponds to a discrete distribution over column submatrices of the identity matrix, and Gaussian descent \cite{S.U.StichC.L.Muller2014} corresponds to the case when the random matrix is a  Gaussian vector. 

We equip the method with several complexity results with the same rate of exponential decay in expectation (aka linear convergence) and establish a tight lower bound on the rate. In particular, we prove convergence of primal iterates, dual function values, primal function values, duality gap and of the residual. The method converges under very weak conditions beyond consistency of the linear system. In particular, no rank assumptions on the system matrix are needed. For instance,   randomized Kaczmarz method converges linearly as long as the system matrix contains no zero rows.

Further, we show that SDA can be applied to the distributed (average) consensus problem. We recover a standard randomized gossip algorithm as a special case, and show that its complexity is proportional to the number of edges in the graph and inversely proportional to the smallest nonzero eigenvalue of the graph Laplacian.  Moreover, we illustrate how our framework can be used to obtain new randomized algorithms for the distributed consensus problem. 

Our framework extends to several other problems in optimization and numerical linear algebra. For instance, one can apply it to develop new stochastic algorithms for computing the inverse of a matrix and obtain state-of-the art performance for inverting matrices of huge sizes.

\section{Acknowledgements}

The second author would like to thank Julien Hendrickx from Universit\'{e} catholique de Louvain for a discussion regarding randomized gossip algorithms.

{\small
 \printbibliography
 }

\section{Appendix: Elementary Results Often Used in the Paper}

We first state a lemma comparing the null spaces and range spaces of certain related matrices.  While the result is of an elementary nature, we use it several times in this paper, which justifies its elevation to the status of a lemma. The proof is brief and hence we include it for completeness.

\begin{lemma}\label{lem:09709s}For any matrix $W$ and symmetric positive definite matrix $G$,
\begin{equation}\label{eq:8ys98hs}\Null{W} = \Null{W^\top G W}\end{equation}
and
\begin{equation}\label{eq:8ys98h986ss}\Range{W^\top } = \Range{W^\top G W}.\end{equation}
\end{lemma}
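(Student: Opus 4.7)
The plan is to prove \eqref{eq:8ys98hs} first by direct double inclusion, and then obtain \eqref{eq:8ys98h986ss} as an immediate corollary via orthogonal complements.

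For the inclusion $\Null{W} \subseteq \Null{W^\top G W}$, I would simply observe that $Wx=0$ trivially implies $W^\top G W x =0$, requiring no assumption on $G$. The reverse inclusion is the only place where positive definiteness of $G$ actually enters. Suppose $W^\top G W x =0$. Left-multiplying by $x^\top$ yields $x^\top W^\top G W x = (Wx)^\top G (Wx) = 0$. Since $G$ is symmetric positive definite, the quadratic form $v \mapsto v^\top G v$ vanishes only at $v=0$, so $Wx=0$, i.e., $x \in \Null{W}$. This is the only step that is more than a formal manipulation, and it is quite short.

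For \eqref{eq:8ys98h986ss}, I would use the standard orthogonality identity $\Range{M^\top} = \Null{M}^\perp$, valid for any real matrix $M$. Applying it to $M=W$ gives $\Range{W^\top} = \Null{W}^\perp$, and applying it to the symmetric matrix $M=W^\top G W$ gives $\Range{W^\top G W} = \Range{(W^\top G W)^\top} = \Null{W^\top G W}^\perp$. By part \eqref{eq:8ys98hs} the two null spaces coincide, hence so do their orthogonal complements, yielding \eqref{eq:8ys98h986ss}.

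I do not anticipate any real obstacle: the only non-formal step is the use of positive definiteness of $G$ to conclude $Wx=0$ from $(Wx)^\top G (Wx)=0$, and everything else is a straightforward application of the rank-nullity/orthogonal-complement duality between range and null space. The proof should fit comfortably in a short paragraph.
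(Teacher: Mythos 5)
Your proposal is correct and follows essentially the same route as the paper: the nontrivial inclusion $\Null{W^\top G W}\subseteq \Null{W}$ via the vanishing quadratic form $(Wx)^\top G (Wx)=0$ (the paper phrases this through $\|G^{1/2}Wx\|^2=0$, which is the same observation), and then \eqref{eq:8ys98h986ss} by taking orthogonal complements. No gaps.
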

\begin{proof} 
In order to establish \eqref{eq:8ys98hs}, it suffices to show the inclusion $\Null{W} \supseteq \Null{W^\top G W}$ since the reverse inclusion trivially holds. Letting $s\in \Null{W^\top G W}$, we see that $\|G^{1/2}Ws\|^2=0$, which implies $G^{1/2}Ws=0$. Therefore, $s\in \Null{W}$. Finally, \eqref{eq:8ys98h986ss} follows from \eqref{eq:8ys98hs} by taking orthogonal complements. Indeed, $\Range{W^\top}$ is the orthogonal complement of $\Null{W}$ and $\Range{W^\top G W}$ is the orthogonal complement of $\Null{W^\top G W}$. \qed
\end{proof}

\bigskip
The following technical lemma is a variant of a standard  result of linear algebra (which is recovered in the $B=I$ case). While the results are folklore and easy to establish, in the proof of our main theorem we need certain details which are hard to find in textbooks on linear algebra, and hence hard to refer to. For the benefit of the reader, we include the  detailed statement and proof.
 
\begin{lemma} [Decomposition and Projection]\label{eq:decomposition} Each $x\in \R^n$ can be decomposed in a unique way as $x = s(x) + t(x)$, where $s(x)\in \Range{B^{-1}A^\top}$  and $t(x)\in \Null{A}$. Moreover, the decomposition can be computed explicitly as
\begin{equation}
\label{eq:98hs8hss}s(x) =  \arg \min_{s} \left\{ \|x-s\|_B \;:\; s\in \Range{B^{-1}A^\top} \right\}=  B^{-1} Z_A x \end{equation}
and
\begin{equation}
\label{eq:98hs8htt}t(x) = \arg \min_{t} \left\{ \|x-t\|_B \;:\; t\in \Null{A} \right\}= (I - B^{-1}Z_A) x,\end{equation}
where
\begin{equation}\label{eq:Z_A}Z_A\eqdef A^\top (AB^{-1}A^\top)^\dagger  A.\end{equation}
Hence, the matrix $B^{-1}Z_A$ is a projector in the $B$-norm onto $\Range{B^{-1}A^\top}$, and $I-B^{-1}Z_A$ is a projector in the $B$-norm onto $\Null{A}$. Moreover, for all $x\in \R^n$ we have $\|x\|_B^2 = \|s(x)\|_B^2 + \|t(x)\|_B^2$, with
\begin{equation}\label{eq:iuhiuhpp}\|t(x)\|_B^2 = \|(I-B^{-1}Z_{A})x\|_B^2 = x^\top (B-Z_{ A}) x\end{equation}
and
\begin{equation}\label{eq:iuhiuhppss}\|s(x)\|_B^2 = \|B^{-1}Z_{A} x\|_B^2 = x^\top Z_{ A} x.\end{equation}
Finally, 
\begin{equation}\label{eq:ugisug7sss}\Rank{A} = \Tr{B^{-1}Z_A}.\end{equation}
\end{lemma}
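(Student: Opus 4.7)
The statement bundles several closely related facts about $B$-orthogonal projection onto $\Null{A}$ and $\Range{B^{-1}A^\top}$, so the plan is to establish the direct-sum decomposition first, then verify that the proposed formulas give the projectors, and finally read off all the remaining identities as consequences.

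First I would show that $\R^n = \Range{B^{-1}A^\top} \oplus \Null{A}$ and that this decomposition is $B$-orthogonal. The orthogonality is the one-line computation $\langle B^{-1}A^\top y,\, t\rangle_B = y^\top A t = 0$ whenever $At = 0$. Uniqueness then follows because $\Range{B^{-1}A^\top} \cap \Null{A} = \{0\}$ (a vector in the intersection is $B$-orthogonal to itself), and a dimension count using $\dim\Range{B^{-1}A^\top} = \Rank{A}$ (since $B^{-1}$ is invertible) and $\dim\Null{A} = n - \Rank{A}$ shows the two subspaces span $\R^n$. The two minimization characterizations in~\eqref{eq:98hs8hss}--\eqref{eq:98hs8htt} are then the standard fact that $B$-orthogonal projection onto a subspace minimizes $B$-distance.

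Next I would verify that $B^{-1}Z_A$ really is the $B$-orthogonal projector onto $\Range{B^{-1}A^\top}$. Membership $B^{-1}Z_A x \in \Range{B^{-1}A^\top}$ is immediate from the definition $Z_A = A^\top(AB^{-1}A^\top)^\dagger A$. For the complementary statement $(I - B^{-1}Z_A)x \in \Null{A}$ I need the identity $A B^{-1} Z_A = A$, which reduces to $AB^{-1}A^\top (AB^{-1}A^\top)^\dagger A = A$. The left factor $AB^{-1}A^\top (AB^{-1}A^\top)^\dagger$ is the orthogonal projector onto $\Range{AB^{-1}A^\top}$, and by Lemma~\ref{lem:09709s} (with $W = A^\top$ and $G = B^{-1}$) this range coincides with $\Range{A}$, so the projector fixes every column of $A$. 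This is the main (though still short) technical step; everything else is bookkeeping.

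Finally, the remaining identities drop out. The Pythagorean identity $\|x\|_B^2 = \|s(x)\|_B^2 + \|t(x)\|_B^2$ is just the $B$-orthogonality of $s(x)$ and $t(x)$. For~\eqref{eq:iuhiuhppss} I would expand $\|B^{-1}Z_A x\|_B^2 = x^\top Z_A B^{-1} Z_A x$ and collapse it to $x^\top Z_A x$ via the pseudoinverse identity $(AB^{-1}A^\top)^\dagger AB^{-1}A^\top (AB^{-1}A^\top)^\dagger = (AB^{-1}A^\top)^\dagger$; then~\eqref{eq:iuhiuhpp} follows by subtraction using the Pythagorean identity. For~\eqref{eq:ugisug7sss}, since $B^{-1}Z_A$ is a projector onto $\Range{B^{-1}A^\top}$ it is idempotent, so its trace equals its rank, and its rank equals $\dim \Range{B^{-1}A^\top} = \Rank{A^\top} = \Rank{A}$. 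I expect no serious obstacle; the only place where care is needed is the pseudoinverse manipulation showing $AB^{-1}Z_A = A$ and $Z_A B^{-1} Z_A = Z_A$, and both reduce to the standard $MM^\dagger M = M$ identity after identifying the correct range.
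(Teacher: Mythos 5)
Your proposal is correct; every step goes through, and the two pseudoinverse identities you flag as the only delicate points ($AB^{-1}Z_A=A$ and $Z_AB^{-1}Z_A=Z_A$) do indeed follow from $MM^\dagger M=M$ once one knows $\Range{AB^{-1}A^\top}=\Range{A}$, which is exactly Lemma~\ref{lem:09709s}. Your route differs from the paper's in two places. For existence and uniqueness of the decomposition, the paper argues constructively: it uses Lemma~\ref{lem:09709s} to produce $u$ with $Ax=AB^{-1}A^\top u$, sets $s=B^{-1}A^\top u$, $t=x-s$, and gets uniqueness from $\Null{AB^{-1}A^\top}=\Null{A^\top}$; you instead use $B$-orthogonality of the two subspaces plus a dimension count to get the direct sum $\R^n=\Range{B^{-1}A^\top}\oplus\Null{A}$. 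For the explicit formula $s(x)=B^{-1}Z_Ax$, the paper derives it variationally, writing down the normal equations $Ax=AB^{-1}A^\top y$ of the least-squares problem $\min_y\|x-B^{-1}A^\top y\|_B^2$ and taking the least-norm solution $y=(AB^{-1}A^\top)^\dagger Ax$, whereas you verify directly that $B^{-1}Z_A$ has the two defining properties of the $B$-orthogonal projector. The paper's variational derivation explains \emph{where} the pseudoinverse formula comes from and sidesteps the identity $AB^{-1}Z_A=A$ entirely; your algebraic verification is a touch shorter and makes the idempotence needed for the trace identity \eqref{eq:ugisug7sss} explicit. Similarly, for \eqref{eq:iuhiuhpp}--\eqref{eq:iuhiuhppss} the paper computes $\|t\|_B^2=x^\top Bt$ using orthogonality and obtains $\|s\|_B^2$ by subtraction, while you expand $\|s\|_B^2$ directly via $Z_AB^{-1}Z_A=Z_A$ and subtract the other way; both are fine.
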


\begin{proof} Fix arbitrary $x\in \R^n$. We first establish existence of the decomposition. By Lemma~\ref{lem:09709s} applied to $W=A^\top$ and $G=B^{-1}$ we know that there exists $u$ such that $Ax = A B^{-1}A^\top u$. Now let $s = B^{-1}A^\top u$ and $t = x-s$. Clearly, $s\in \Range{B^{-1}A^\top}$ and $t\in \Null{A}$.  For  uniqueness, consider two decompositions: $x = s_1+ t_1$ and $x=s_2 + t_2$. Let $u_1,u_2$ be vectors such that $s_i = B^{-1}A^\top u_i$, $i=1,2$. Then $AB^{-1}A^\top(u_1-u_2)=0$. Invoking Lemma~\ref{lem:09709s} again, we see that $u_1-u_2\in \Null{A^\top}$, whence $s_1 = B^{-1}A^\top u_1 = B^{-1}A^\top u_2 = s_2$. Therefore, $t_1 = x - s_1 = x-s_2 = t_2$, establishing uniqueness.

Note that $s = B^{-1}A^\top y$, where $y$ is any solution of the optimization problem 
\[\min_y \tfrac{1}{2}\|x-B^{-1}A^\top y\|_B^2.\]
The first order necessary and sufficient optimality conditions are $Ax = AB^{-1}A^\top y$. In particular, we may choose $y$ to be the least norm solution of this system, which gives $y=(AB^{-1}A^\top)^\dagger Ax$, from which \eqref{eq:98hs8hss} follows. The variational formulation \eqref{eq:98hs8htt}  can be established in  a similar way, again via first order optimality conditions (note that the closed form formula \eqref{eq:98hs8htt} also directly follows from \eqref{eq:98hs8hss} and the fact that $t = x - s$). 

Next,  since $x=s+t$ and $s^\top B t = 0$,  
\begin{equation}\label{eq:09u0hss}
\|t\|_B^2=(t+s)^\top B t = x^\top B t \overset{\eqref{eq:98hs8htt}}{=} x^\top B (I-B^{-1}Z_{ A})x
= x^\top (B - Z_{ A}) x
\end{equation}
and
\[ \|s\|_B^2 = \|x\|_B^2 - \|t\|_B^2  \overset{\eqref{eq:09u0hss}}{=} x^\top Z_A x.\]

It only remains to establish \eqref{eq:ugisug7sss}. Since  $B^{-1}Z_A$ is a projector onto $\Range{B^{-1}A^\top}$ and since the trace of each projector is equal to the dimension of the space they project onto, we have $\Tr{B^{-1}Z_A} = \dim(\Range{B^{-1}A^\top}) = \dim(\Range{A^\top}) = \Rank{A}$.\qed

\end{proof}

\end{document}